\newcommand{\gra}[1]{\raisebox{-.4cm}{\includegraphics[height=1cm]{UP#1.pdf}}}
\newcommand{\graa}[1]{\raisebox{-.6cm}{\includegraphics[height=1.5cm]{UP#1.pdf}}}
\newcommand{\grb}[1]{\raisebox{-.8cm}{\includegraphics[height=2cm]{UP#1.pdf}}}
\providecommand{\etalchar}[1]{$^{#1}$}
\newcommand{\Pl}{\mathscr{P}}
\begin{document}
	
\title{Classification of Thurston-relation subfactor planar algebras}
\author{Corey Jones, Zhengwei Liu, Yunxiang Ren}
\maketitle

\begin{abstract}
	
	 Bisch and Jones suggested the skein theoretic classification of planar algebras and investigated the ones generated by 2-boxes with the second author. In this paper,  we consider 3-box generators and classify subfactor planar algebras generated by a non-trivial 3-box satisfying a relation proposed by Thurston. The subfactor planar algebras in the classification are either $E^6$ or the ones from representations of quantum $SU(N)$. We introduce a new method to determine positivity of planar algebras and new techniques to reduce the complexity of computations.
\end{abstract}

\newtheorem{Lemma}{Lemma}
\theoremstyle{plain}
\newtheorem{theorem}{Theorem~}[section]
\newtheorem{main}{Main Theorem~}
\newtheorem{lemma}[theorem]{Lemma~}
\newtheorem{assumption}[theorem]{Assumption~}
\newtheorem{proposition}[theorem]{Proposition~}
\newtheorem{corollary}[theorem]{Corollary~}
\newtheorem{definition}[theorem]{Definition~}
\newtheorem{defi}[theorem]{Definition~}
\newtheorem{notation}[theorem]{Notation~}
\newtheorem{example}[theorem]{Example~}
\newtheorem*{remark}{Remark~}
\newtheorem*{cor}{Corollary~}
\newtheorem*{question}{Question}
\newtheorem*{claim}{Claim}
\newtheorem*{conjecture}{Conjecture~}
\newtheorem*{fact}{Fact~}
\renewcommand{\proofname}{\bf Proof}

\section{introduction}

Jones intiated the modern theory of subfactors to study quantum symmetry \cite{Jon83}. A deep theorem of Popa says the standard invariants completely classify strongly amenable subfactors \cite{Pop94}. In particular, the $A$, $D$, $E$ classification of subfactors up to index 4 is a quantum analogy of Mackey correspondence.
Moreover, Popa introduced standard $\lambda$-lattices as an axiomatization of the standard invariant \cite{Pop95}, which completes Ocneanu's axiomatization for finite depth subfactors \cite{Ocn88}.

Jones introduced (subfactor) planar algebras as an axiomatization of the standard invariant of subfactors, which capture its ulterior topological properties.
He suggested studying planar algebras by skein theory, a presentation theory which allows one to completely describe the entire planar algebra in terms of generators and relations, both algebraic and topological.

An $m$-box generator for a planar algebra is usually represented by a $2m$-valent labelled vertex. A planar algebra is a representation of labelled planar diagrams, namely {\it fully labelled planar tangles}, on vector spaces in the flavor of TQFT \cite{Ati88}:
the representation is well defined up to isotopy;
the target vector space only depends on the boundary condition of diagrams;
in particular, diagrams without boundary are mapped to the ground field, called the partition function.
A planar algebra is called a subfactor planar algebra if its partition function is positive definite. In this case, the vector spaces become Hilbert spaces.

A skein theory is a presentation of a planar algebra by generators and relations, such that the partition function can be evaluated by the relations.
The planar algebra is determined by these generators and relations once an evaluation algorithm is obtained. Therefore, one can ask for a skein theoretic classification of planar algebras, as suggested by Bisch and Jones.

One can find interesting skein theories for planar diagrams in graph theory, such as the exchange relation \cite{Lan02} motivated by biprojections from intermediate subfactors \cite{Bis94}, and the relations coming from the discharge method \cite{MPS15}.  Other types of skein theory were introduced, including Thurston's relation \cite{Thu04}, the Jellyfish relation \cite{BMPS} and the Yang-Baxter relation \cite{LiuYB}, motivated by knot theory and suggested by Bisch and Jones \cite{BisJon00,BisJon03}.

The simplest planar algebra is the Temperley-Lieb-Jones planar algebra which has neither generators nor relations.
Planar algebras generated by 1-boxes were completely analyzed by Jones \cite{JonPA}.
Bisch and Jones initiated the classification of planar algebras generated by a single 2-box \cite{BisJon00}. Based on the subsequent work of Bisch, Jones and the second author \cite{BisJon03,BJL}, a classification of singly generated Yang-Baxter relation planar algebra was achieved in \cite{LiuYB}, where a new family of planar algebras was constructed. Planar algebras with multiple 2-box generators were discussed in \cite{Liuex}.

In this paper, we study planar algebras generated by a single 3-box.
There is a known two parameter family of examples $\mathscr{P}_\bullet^{H}(q,r)$ related to quantum $SU(N)$, see Example 2.5 in \cite{JonPA}.
We classify all $q,r$ for which $\mathscr{P}_\bullet^{H}(q,r)$ has a positive partition function,  (See Theorem \ref{Thm:positivity}).  Its semisimple quotient is a subfactor planar algebra. The corresponding subfactors are known as Jones-Wenzl subfactors \cite{Jon87,Wen88}. The subfactor planar algebras were constructed by Xu \cite{Xu98S}.
The skein theory of $\mathscr{P}_\bullet^{H}(q,r)$ is inherited from the HOMFLY-PT skein relations \cite{Homfly,PT88}.
Thurston provided an intrinsic skein theory which was designed for 6-valent planar graphs \cite{Thu04}, which we call here Thurston's relation.

In this paper, we classify subfactor planar algebras with Thurston's relation generated by a non-trivial 3-box, namely singly generated Thurston-relation planar algebras:
\begin{theorem}[Main Theorem]\label{Thm:main}
	Any singly generated Thurston-relation planar algebra is either $E_6$ or (the semisimple quotient of) $\mathscr{P}_\bullet^{H}(q,r)$. Moreover, $r=q^N$ for some $N\in\mathbb{N}$, $N\geq3$, and $q=e^{\frac{i\pi}{N+l}}$ for some $l\in\mathbb{N}$, $l\geq3$, or $q\geq1$.
\end{theorem}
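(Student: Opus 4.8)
The plan is to begin with an abstract singly generated Thurston-relation planar algebra $\Pl_\bullet$ with non-trivial $3$-box generator, reduce its skein theory to a finite list of numerical parameters, solve the resulting consistency equations, and finally impose positivity. First I would normalize the generator $S$. The $3$-box space carries the rotation, so after subtracting its Temperley-Lieb part I may assume $S$ lies in the orthogonal complement of $\TL$, is self-adjoint, and is a rotational eigenvector with eigenvalue $\omega$; non-triviality guarantees $S\neq 0$. The skein data is then captured by the loop parameter $\delta$, the eigenvalue $\omega$, and the structure constants that appear when two copies of $S$ are composed and the result is re-expanded in a spanning set of the relevant box space.

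Next I would establish the evaluation algorithm. Thurston's relation rewrites a distinguished diagram built from the $6$-valent vertex as a linear combination of strictly simpler diagrams; together with the capping and bubble-popping relations forced by uncappability, repeated application should reduce any closed fully-labelled tangle to a scalar. The real content is confluence: the reductions must be compatible with isotopy and with associativity of multiplication in each box space, and imposing this compatibility produces a system of polynomial equations in $\delta$, $\omega$, and the structure constants.

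Solving this system is where I expect the principal obstacle to lie. The raw polynomial equations are numerous, so the key is to organize them by computing the dimensions $\dim\Pl_{3,\pm}$ and $\dim\Pl_{4,\pm}$ directly and matching them against the dimensions predicted by the candidate families; this is the point at which techniques to reduce the computational complexity become essential. The outcome should isolate, up to the normalizations above, a single continuous family together with one exceptional solution. The family I would identify with $\mathscr{P}_\bullet^{H}(q,r)$ through its HOMFLY-PT skein presentation, and the exceptional solution with $E_6$.

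Finally I would impose positivity: the partition function must be positive definite, equivalently the Gram matrix of diagrams in each box space must be positive semidefinite of the correct rank. Rather than testing infinitely many Gram matrices, I would use the evaluation algorithm to reduce positivity to finitely many inequalities on $(q,r)$. For the $H$-family this is exactly the content of Theorem \ref{Thm:positivity}, whose solution set is $r=q^N$ with integer $N\geq 3$ together with either $q\geq 1$, the generic infinite-depth case, or $q=e^{i\pi/(N+l)}$ with integer $l\geq 3$, the root-of-unity case in which one passes to the semisimple quotient; the $E_6$ solution accounts for the remaining possibility. The recurring difficulty throughout is the combinatorial explosion of these polynomial and inequality systems, which is precisely what the new reduction and positivity methods are designed to control.
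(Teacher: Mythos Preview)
Your broad architecture---normalize the generator, extract numerical parameters, solve the resulting constraints, then impose positivity via Theorem~\ref{Thm:positivity}---matches the paper's, but two structural features of the actual proof are absent from your plan and at least one of them is a genuine gap.

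First, the paper does not derive its polynomial constraints from a confluence argument for the evaluation algorithm. Evaluability is taken as already established by Thurston; the constraints come instead from direct diagrammatic manipulations (capping, rotating, expanding specific $4$-box diagrams in a chosen basis) carried out in the $f_2$-cutdown, which collapses the $4$-box space from $24$ dimensions to $9$. This cutdown is the key simplification device, and without it the computations you describe would be substantially harder. The specific milestones are proving $\omega=1$ and then $a=a'=1$ for the sign in each $2\leftrightarrow 2$ move, after which only two parameters $(\delta,\gamma)$ remain and are matched explicitly to $(q,r)$ via the trace formula for minimal idempotents in the Hecke algebra.

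Second, and more seriously, $E_6$ does not appear as an ``exceptional solution'' of the polynomial system. The paper splits cases by $\dim\mathscr{P}_{4,\pm}$: the analysis above covers $\dim=24$ and $\dim=23$ (the latter requiring a separate triple-point-obstruction argument to force $\omega=1$), and in both cases the answer is always $\mathscr{P}_\bullet^H(q,r)$. The case $\dim\le 22$ forces $\delta\le 2$ by Ocneanu's triple point obstruction, and then the $ADE$ classification of subfactors of index at most $4$ singles out $E_6$ (and $E_6^{(1)}$, which is already $SU(3)_3$). Your plan to recover $E_6$ from the same polynomial system that produces the $H$-family would not work as stated; you need this external input from the small-index classification.
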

We remark that it would be interesting to find a skein theory for a single 3-box which gives a family of subfactor planar algebras involving $E_6$.

The paper is organized as follows.
In \S \ref{Sec:Pre} , we recall planar algebras, HOMFLY-PT skein theory, and Thurston's skein theory.
The $m$-box space of a singly generated Thurston-relation planar algebra $\mathscr{P}_{\bullet}$ generically has dimension $m!$.

In \S \ref{Sec:generic} , we classify the generic case, namely $\dim \mathscr{P}_{4,\pm}=24$.
We set up five formal variables for Thurston's relation using 3-box relations. We prove that only two variables survive after considering 4-box relations (Theorems \ref{thm1}, \ref{thm2}). Then we identify the two-parameter family with $\mathscr{P}_\bullet^{H}(q,r)$ (Theorem \ref{Thm:main1}).
Technically we simplify the computation by working on the reduced planar algebra with respect to the second Jones-Wenzl idempotent $f_2$. The reduced planar algebra has smaller $m$-box spaces and their dimensions are $1,0,1,2,9, \cdots$.
\footnote{One may consider the planar algebra to be generated by two trivalent vertices, but the relations become much more complicated.
	The H-I relation of the trivalent vertices is known as the 6-j symbol. However, the 6-j symbol is only known for the first couple of objects in terms of the formal variables. Our planar algebra may contain infinitely many simple objects. Thus the H-I relation in the reduced planar algebra does not give an evaluation algorithm. Thereby the H-I relation for the other shading (given by Thurston's relation) is necessary and important.}

In \S \ref{Sec:reduced} , we classify the reduced case, namely $\dim \mathscr{P}_{4,\pm}\leq 23$.

In \S \ref{Section:Pos}, we classify all $q,r$ for which $\mathscr{P}_{\bullet}^H(q,r)$  has a positive Markov trace. In this case, $\mathscr{P}_\bullet^{H}(q,r)$ has a unique involution $*$, so that its semi-simple quotient is a subfactor planar algebra.
Then we complete our classification, Theorem \ref{Thm:main}.

\section{Preliminaries}\label{Sec:Pre}

We refer the readers to \cite{JonPA} for definition, properties, examples and skein theory of planar algebras.

\subsection{HOMFLY-PT planar algebras}

The HOMFLY-PT polynomial is a link invariant given by a braid $\gra{b+}$ satisfying Reidemeister moves I, II, III and the Hecke relation.

\begin{align}
	&\text{Hecke relation:}    & \gra{b+}-\gra{b-}&=(q-q^{-1})\gra{b2id}, && \\
	&\text{Reidemeister moves I:}   & \gra{homfly11}&=r\gra{b1id}; & \gra{homfly12}&=r^{-1}\gra{b1id}; \\
	&& \gra{homfly13}&=r \gra{b1id-}; & \gra{homfly14}&=r^{-1}\gra{b1id-},\\
	&\text{Reidemeister moves II:}  & \gra{homfly21}&=\gra{homfly22}; &  \gra{homfly25}&=\gra{homfly26}; \\
	&& \gra{homfly23}&=\gra{homfly24}; &  \gra{homfly27}&=\gra{homfly28},\\
	&\text{Reidemeister moves III:}  & \gra{homfly31}&=\gra{homfly32}; &  \gra{homfly33}&=\gra{homfly34},\\
	&\text{circle parameter:} & \gra{r0+}&=\gra{r0-}=\delta,\label{circpara} \\
	&&r-r^{-1}&=\delta(q-q^{-1})
\end{align}
\begin{remark}
	When $q=\pm1$, we have $r=\pm1$. Then the skein relation is determined by the circle parameter $\delta$. When $q\neq\pm1$, $\delta=\frac{r-r^{-1}}{q-q^{-1}}$.
\end{remark}

Let $\sigma_i$, $i\geq1$, be the diagram obtained by adding $i-1$ oriented (from bottom to top) through-strings on the left of $\gra{b+}$.
The Hecke algebra of type $A$ is a (unital) filtered algebra $H_{\bullet}$. The algebra $H_n$ is generated by $\sigma_i$, $1\leq i \leq n-1$ and $H_n$ is identified as a subalgebra of $H_{n+1}$ by adding an oriented through string on the right.
Over the field $\mathbb{C}(q,r)$, the equivalence classes of minimal idempotents of $H_n$ are indexed by Young-diagrams with $n$ cells. The trace formula is given by

\begin{theorem}[\cite{Res87,AisMor98}] \label{traceformula}
	Let $\lambda$ be a Young diagram and $m_\lambda$ the minimal idempotent corresponding to $\lambda$, then
	\begin{equation}\label{equ:traceformula}
	Tr(m_\lambda)=\prod_{i,j}\displaystyle\frac{rq^{c(i,j)}-r^{-1}q^{-c(i,j)}}{q^{h_{(i,j)}}-q^{-h_{(i,j)}}}
	\end{equation}	
	where $c(i,j)=j-i$ is the content of the cell $(i,j)$ in $\lambda$ and $h_{(i,j)}$ is its hook length.
\end{theorem}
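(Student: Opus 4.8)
The plan is to prove the formula by induction on $n=|\lambda|$, exploiting the fact that $Tr$ is the Markov trace attached to the HOMFLY-PT relations together with the branching structure of the Hecke algebra. First I would pin down the Markov property of $Tr$: since $Tr$ is computed by closing up braids, tracing out the rightmost strand defines a conditional expectation $E\colon H_{n}\to H_{n-1}$ with $Tr=Tr\circ E$, and the defining relations force the normalizations $E(1)=\delta$ and $E(\sigma_{n-1}^{\pm1})=r^{\mp1}$ (a single closed crossing is a curl, evaluated by Reidemeister I, while a plain closed strand is the circle $\delta$). These conditions characterize $Tr$ as the unique normalized trace on $H_\bullet$.

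Second, I would use the semisimple path-model structure over $\mathbb{C}(q,r)$: the minimal idempotents $m_\lambda$ are indexed by $\lambda\vdash n$, the inclusion $H_{n-1}\subset H_{n}$ is governed by the Young lattice, and restricting $m_\lambda$ to $H_{n-1}$ decomposes it into blocks indexed by the diagrams $\mu\nearrow\lambda$ obtained by deleting one removable cell. The key computational input is the eigenvalue of the Jucys--Murphy element $L_n$ (the braid carrying the last strand around the others) on the $\mu$-block, which is a monomial in $q,r$ determined by the content $c(\lambda/\mu)$ of the deleted cell. Feeding this through $E$ converts the assertion into a recursion
\[
Tr(m_\lambda)=\frac{1}{\delta}\sum_{\mu\nearrow\lambda} w_{c(\lambda/\mu)}\,Tr(m_\mu),
\]
with explicit content-weights $w_c\propto rq^{\,c}-r^{-1}q^{-c}$.

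Third, assuming the formula for all $\mu$ with $|\mu|<n$, the problem reduces to the purely combinatorial identity
\[
\sum_{\square\ \mathrm{removable}} w_{c(\square)}\prod_{(i,j)\in\lambda\setminus\square}\frac{rq^{c(i,j)}-r^{-1}q^{-c(i,j)}}{q^{h'_{(i,j)}}-q^{-h'_{(i,j)}}}=\delta\prod_{(i,j)\in\lambda}\frac{rq^{c(i,j)}-r^{-1}q^{-c(i,j)}}{q^{h_{(i,j)}}-q^{-h_{(i,j)}}},
\]
where $h'_{(i,j)}$ is the hook length computed in $\lambda\setminus\square$. Deleting a corner $\square$ decreases by one exactly the hook lengths of the cells lying in its row and its column, so the denominators telescope against the full product for $\lambda$. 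This telescoping, combined with the summation over corners, is the quantum hook-content recursion, and verifying it is the main obstacle: one must track precisely how each removable corner alters the hook data of the cells in its arm and leg, and confirm that the contents of the corners assemble with the circle parameter $\delta=(r-r^{-1})/(q-q^{-1})$. I would fix signs and normalizations against the base case $\lambda=(1)$, where the formula reads $Tr(m_{(1)})=(r-r^{-1})/(q-q^{-1})=\delta$, and against small $n$.

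As a conceptual alternative and independent check, one can instead invoke quantum Schur--Weyl duality: specializing $r=q^{N}$ identifies $H_n$ with $\mathrm{End}_{U_q(\mathfrak{gl}_N)}(V^{\otimes n})$ and $Tr$ with the quantum trace, so that $Tr(m_\lambda)$ becomes the quantum dimension $\dim_q V_\lambda$, whose Weyl dimension formula coincides with the stated hook-content product. Since both sides are rational functions of $(q,r)$ agreeing on the infinitely many specializations $r=q^{N}$, they agree identically over $\mathbb{C}(q,r)$. This route replaces the combinatorial telescoping by the (equally classical) identification of the quantum Weyl formula with the hook-content expression, which is why I would regard the inductive Markov-trace argument as the more self-contained proof within the skein-theoretic setting of this paper.
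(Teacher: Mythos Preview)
The paper does not prove this theorem: it is stated with attribution to \cite{Res87,AisMor98} and used as a black box, so there is no ``paper's own proof'' to compare against. Your outline is, in fact, a faithful sketch of the arguments in those references: the skein-theoretic Markov-trace recursion you describe is essentially Aiston--Morton's approach, and your Schur--Weyl alternative is close in spirit to Reshetikhin's. The paper even alludes to your second route in the remark after Lemma~\ref{lemma:idem}, noting that one can recover the small-idempotent traces by checking the formula at special values and then using that both sides are rational in $(q,r)$.

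As a standalone argument your plan is sound, but two places deserve more care before it would count as a proof. First, the recursion you write with the factor $1/\delta$ and weights $w_c\propto rq^{c}-r^{-1}q^{-c}$ hides a nontrivial step: the conditional expectation $E$ applied to $m_\lambda$ does not directly yield $\sum_\mu c_\mu m_\mu$; one has to use that $m_\lambda$ can be realized so that $\sigma_{n-1}$ acts on its last tensor slot with a prescribed Jucys--Murphy eigenvalue, and then combine $E(1)=\delta$ with $E(\sigma_{n-1}^{\pm1})$ to extract the correct weight. Getting the exact normalization (so that the base case $Tr(m_{(1)})=\delta$ holds on the nose) fixes these constants, but you should actually carry that computation through rather than assert it. Second, the ``quantum hook-content recursion'' you isolate is the real work; the telescoping of hook lengths along the arm and leg of a removed corner is classical but not a one-liner, and the identity you display needs either a careful bijective argument or a generating-function proof. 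None of this is wrong---it is exactly the standard path---but as written the proposal is an outline rather than a proof.
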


Jones studied the planar algebras $\mathscr{P}_\bullet^H(q,r)$ associated with HOMFLY-PT skein relation \cite{JonPA}. Its $n$-box space consists of HOMFLY-PT diagrams which have $2n$ boundary points and alternating orientation on the boundary as follows:
$$\grb{HOMFLYPA}.$$
Moreover, he proved that this planar algebra is generated by a 3-box:
\begin{theorem}[Jones]\label{Thm:Jones3-box}
	The planar algebra $\mathscr{P}_\bullet^H(q,r)$ is generated by $\graa{HOMFLY}$.
\end{theorem}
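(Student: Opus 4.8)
The plan is to show that the planar subalgebra $\langle Z\rangle$ generated by the single $3$-box $Z=\graa{HOMFLY}$ contains every oriented HOMFLY-PT diagram, and hence is all of $\mathscr{P}_\bullet^H(q,r)$. First I would record the generating set that comes directly from the diagrammatic description: by slicing an oriented tangle along a height function (a Morse-theoretic decomposition), every diagram in an $n$-box space is a planar composition of oriented local maxima and minima together with the positive and negative crossings $\sigma,\sigma^{-1}$. The oriented cups and caps belong to \emph{every} planar algebra, since they are the images of input-free planar tangles and are therefore forced to lie in any planar subalgebra by closure; thus they already belong to $\langle Z\rangle$. By the Hecke relation $\sigma-\sigma^{-1}=(q-q^{-1})\,\mathrm{id}$, the negative crossing is a linear combination of the positive crossing and the identity. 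Consequently $\mathscr{P}_\bullet^H(q,r)$ is spanned by oriented Temperley–Lieb tangles together with the single positive crossing, and it suffices to produce one copy of $\sigma$ inside $\langle Z\rangle$.

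Next I would compare $Z$ with the bare crossing. Up to the fixed planar isotopy defining the alternating-orientation boundary convention of $\mathscr{P}_{3,\pm}^H$, the generator is the positive crossing $\sigma\in H_2$ with one oriented through-string adjoined, i.e.\ the bent image of $\sigma_1\in H_3$. To invert this, I would apply the inverse isotopy and then close off the adjoined through-string with an oriented cup and cap. Closing a single straight strand creates one closed loop, i.e.\ contributes a factor of the circle parameter $\delta$, so the output is $\delta\,\sigma$. Since $\delta=\frac{r-r^{-1}}{q-q^{-1}}\neq 0$ over $\mathbb{C}(q,r)$, this operation is invertible and we recover $\sigma=\delta^{-1}(\text{closure of }Z)\in\langle Z\rangle$. (Any curls produced along the way are straightened by Reidemeister move I at the invertible cost of a factor $r^{\pm1}$.)

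Finally I would promote this single crossing to all of the Hecke generators and assemble the conclusion. Tensoring on the left and right by oriented through-strings is a planar operation, so from $\sigma$ one obtains every $\sigma_i\in H_n$, $1\le i\le n-1$, inside $\langle Z\rangle$. Together with the oriented cups and caps, these Hecke generators span $H_n\cong\mathscr{P}_{n,\pm}^H$ (consistent with the generic dimension $n!$ recorded above), and the same argument applies in every box space. Hence $\langle Z\rangle=\mathscr{P}_\bullet^H(q,r)$.

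I expect the main obstacle to be the orientation bookkeeping in the second step: identifying precisely which cups, caps, and through-strings convert the alternating-orientation $3$-box $Z$ back into the unoriented-style crossing, and verifying that the composite of the defining bending with this inverse is genuinely the identity up to an invertible scalar, rather than some degenerate oriented Temperley–Lieb combination. Once the correct annular tangle is pinned down, the nonvanishing of $\delta$ and $r$ over $\mathbb{C}(q,r)$ makes the inversion routine.
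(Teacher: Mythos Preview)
The paper does not give its own proof of this statement: it is attributed to Jones with a citation to \cite{JonPA}, and no argument is reproduced. So there is nothing in the paper to compare your proposal against directly.

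As a standalone argument your outline is essentially the standard one and is correct in spirit. The reduction to producing a single crossing is right: every HOMFLY-PT tangle is a planar composite of oriented cups, caps, and crossings; the cups and caps lie in any planar subalgebra; and the Hecke relation expresses $\sigma^{-1}$ in terms of $\sigma$ and the identity. The extraction of $\sigma$ from the $3$-box by capping off the spectator strand, at the cost of an invertible scalar built from $\delta$ and $r^{\pm1}$, is also the right maneuver.

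The only genuine soft spot is exactly the one you flag at the end. You assert, but do not verify, that $Z$ is (up to isotopy) the positive crossing with one through-string adjoined. In Jones's setup that is indeed the intended generator, but your argument depends on this identification: if $Z$ were instead, say, a difference of two diagrams or a cabled crossing, the capping step would not immediately return $\delta\sigma$. Likewise, the sentence ``tensoring on the left and right by oriented through-strings is a planar operation'' needs care in the alternating-orientation convention: adding a strand on one side reverses the required boundary pattern, so one must use the bending tangles (as in the map $\phi$ of \S\ref{Section:Pos}) rather than a naive tensor. None of this is fatal---once you write down the explicit annular tangle that undoes the bending and caps the spectator, the scalar is visibly a monomial in $\delta$ and $r$ and hence invertible over $\mathbb{C}(q,r)$---but a complete proof should display that tangle rather than leave it implicit.
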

When $r=q^N$ for some $N\in\mathbb{N}$ and $q=e^{\frac{i\pi}{N+l}}$ for some $l\in\mathbb{N}$ or $q\geq1$, $\mathscr{P}_\bullet^H(q,r)$  admits an involution * such that the Markov trace is positive semidefinite. Therefore, the semisimple quotient of $\mathscr{P}_\bullet^H(q,r)$ is a subfactor planar algebra, which can be constructed from the representation theory of quantum SU(N) \cite{Xu98S}. (When $q=1$, we have $r=1$ and $\delta=N$.)

We prove that these are the only possibilities such that $\mathscr{P}_\bullet^H(q,r)$ has positivity in Theorem \ref{Thm:positivity}. First, we identify the isomorphism classes of $\mathscr{P}_\bullet^H(q,r)$:
\begin{proposition}\label{lemsym}
	The four planar algebras $\mathscr{P}_\bullet^H(q,r)$,$\mathscr{P}_\bullet^H(-q^{-1},r)$,$\mathscr{P}_\bullet^H(-q,-r)$,$\mathscr{P}_\bullet^H(q^{-1},r^{-1})$ are isomorphic.
\end{proposition}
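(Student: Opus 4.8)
The plan is to realize each of the three parameter changes as an explicit symmetry of the HOMFLY-PT skein relations, and then to check that the corresponding diagrammatic map either leaves the relations unchanged or intertwines them with the relations for the new parameters. Throughout I write $\sigma$ for the positive crossing generator and $\sigma^{-1}$ for the negative crossing, and I use the key structural fact that the defining relations of $\mathscr{P}_\bullet^H(q,r)$---the Hecke relation, the four Reidemeister~I moves, Reidemeister~II and III, and the circle parameter---involve $q$ only through the combination $q-q^{-1}$, involve $r$ directly, and determine $\delta=\frac{r-r^{-1}}{q-q^{-1}}$.

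The first identification is essentially tautological. Since $(-q^{-1})-(-q^{-1})^{-1}=-q^{-1}+q=q-q^{-1}$, replacing $q$ by $-q^{-1}$ leaves the Hecke relation and the circle parameter unchanged, and the remaining relations do not see $q$ at all. Hence $\mathscr{P}_\bullet^H(q,r)$ and $\mathscr{P}_\bullet^H(-q^{-1},r)$ are presented by literally the same generators and relations, and the identity on diagrams is an isomorphism between them.

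For the remaining two I would define the map diagrammatically on the crossing generator and verify that the defining relations of the source are sent to valid relations of the target. To identify $\mathscr{P}_\bullet^H(q,r)$ with $\mathscr{P}_\bullet^H(-q,-r)$, take the planar-algebra homomorphism $\Phi$ determined by $\sigma\mapsto-\sigma$ (hence $\sigma^{-1}\mapsto-\sigma^{-1}$, forced by $\Phi(\sigma)\Phi(\sigma^{-1})=\mathrm{id}$). Then $\Phi(\sigma)-\Phi(\sigma^{-1})=-(\sigma-\sigma^{-1})=-(q-q^{-1})\,\mathrm{id}=\bigl((-q)-(-q)^{-1}\bigr)\mathrm{id}$, matching the Hecke relation at $\tilde q=-q$; each Reidemeister~I curl contains exactly one crossing and so acquires a factor $-1$, turning a value $r$ into $-r$ and a value $r^{-1}$ into $-r^{-1}=(-r)^{-1}$, matching all four curls at $\tilde r=-r$; and Reidemeister~II, III and the circle relation are preserved because the two sides of each have crossing numbers of the same parity. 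As $\Phi^2=\mathrm{id}$, this is an isomorphism. To identify $\mathscr{P}_\bullet^H(q,r)$ with $\mathscr{P}_\bullet^H(q^{-1},r^{-1})$, take the map $\Psi$ induced by reflecting every diagram in a vertical line: it sends $\sigma\mapsto\sigma^{-1}$, interchanges positive and negative curls, and fixes the crossing number of each diagram. It carries the Hecke relation to $\sigma^{-1}-\sigma=(q^{-1}-q)\,\mathrm{id}$ and swaps the curl values $r\leftrightarrow r^{-1}$, which are precisely the relations at $(\tilde q,\tilde r)=(q^{-1},r^{-1})$, while leaving the parameter-free relations intact. Composing these three identifications shows that all four planar algebras are isomorphic.

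I expect the only real work to be bookkeeping. For $\sigma\mapsto-\sigma$ one must check the sign against all four orientation types of the Reidemeister~I curls, which reduces to the arithmetic $(-r)^{-1}=-r^{-1}$ and the parity count for II and III; for the reflection $\Psi$ one must confirm that it is a genuine planar-algebra isomorphism respecting the alternating boundary orientation of the $n$-box spaces, rather than a mere symmetry of pictures. Neither is a genuine obstacle: the content of the proposition is simply that, up to these explicit substitutions, the HOMFLY-PT relations are invariant under the stated changes of $(q,r)$.
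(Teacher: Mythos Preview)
Your proposal is correct and follows essentially the same approach as the paper: the paper also observes that $(q,r)$ and $(-q^{-1},r)$ give literally the same relations, realizes the $(q,r)\leftrightarrow(-q,-r)$ isomorphism by sending the crossing to its negative, and realizes the $(q,r)\leftrightarrow(q^{-1},r^{-1})$ isomorphism by sending the positive crossing to the negative crossing. The only cosmetic difference is that the paper describes the last map purely as a substitution on the generator rather than as a geometric reflection, which sidesteps the boundary-orientation compatibility check you flagged; you may find it cleaner to phrase $\Psi$ that way as well.
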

\begin{proof}
	Note that $(q,r)$ and $(-q^{-1},r)$ define the same planar algebra.
	
	The isomorphism between $\mathscr{P}_\bullet^H(q,r)$ and $\mathscr{P}_\bullet^H(-q,-r)$ is induced by sending $\gra{HOMFLYG}$ to $-\gra{HOMFLYG}$.
	
	The isomorphism between $\mathscr{P}_\bullet^H(q,r)$ and  $\mathscr{P}_\bullet^H(q^{-1},r^{-1})$ is induced by sending $\gra{HOMFLYG}$ to $\gra{HOMFLYG-}$.
	
\end{proof}

\subsection{Thurston's skein relations}
As emphasized in the introduction, skein theory provides an important perspective from which to understand a planar algebra for many reasons. Skein theories are important starting points for the construction and classification of planar algebras.
In this paper we will study subfactor planar algebras generated by a 3-box.

Recall that $\mathscr{P}_\bullet^H(q,r)$ is generated by a 3-box. Although it has a skein theory derived from the HOMFLY-PT skein relation,
Thurston provides a skein theory intrinsic to the 3-box generator of $\mathscr{P}_\bullet^H(q,r)$ \cite{Thu04}:
\begin{definition}[Thurston's relation \cite{Thu04}]
We say a $3$-box $S$ satisfies the Thurston's relation if
\begin{align}
&1\rightarrow0 \text{ Move:} &&\graa{Uncappable1},\graa{Uncappable2},\graa{Uncappable3},\ \dots\ ,\graa{Uncappable4}=(lower\ terms); \label{Equ:H-I0}\\
\text{Unshaded } &2\leftrightarrow2 \text{ move:} &&\graa{I}=a\ \graa{H}+(lower~terms), ~a\neq0 \label{Equ:H-I1} \;;\\
\text{Shaded } &2\leftrightarrow2 \text{ move:} &&\graa{SI}=b\graa{SH}+(lower~terms), ~b\neq0 \label{Equ:H-I2} \;,
\end{align}
where lower terms are a linear combination of diagrams with less generators.
\end{definition}

\begin{remark}
  One can generalize Thurston's relation for multiple 3-box generators.
\end{remark}

He proved that the above relations are evaluable, in the sense that there exists an algorithm for evaluating every closed planar diagram to a scalar. Moreover, the standard forms in the sense of Thurston \cite{Thu04} give a basis of the $n$-box space when their dimension achieves the maximum of $n!$.

\begin{theorem}[Thurston, \cite{Thu04}]
	Suppose $\mathscr{P}_\bullet$ admits Thurston's relation. Then $\mathscr{P}_{n,\pm}$ is spanned by the standard forms and $\dim{\mathscr{P}_{n,\pm}}\leq n!$.
\end{theorem}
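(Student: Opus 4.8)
The plan is to convert Thurston's three families of relations \eqref{Equ:H-I0}, \eqref{Equ:H-I1}, \eqref{Equ:H-I2} into a terminating rewriting system on fully labelled tangles, so that every element of $\mathscr{P}_{n,\pm}$ is rewritten as a linear combination of a distinguished finite set of irreducible diagrams, the standard forms, and then to bound the number of standard forms by $n!$. Because the ``lower terms'' on the right-hand sides always involve strictly fewer copies of the generator $S$, the natural organizing principle is an induction on a lexicographically ordered complexity $c(D)=(\#\text{generators},\ \#\text{closed loops},\ \mu(D))$, where $\mu$ is a secondary measure to be chosen. The claim I would aim at is that any diagram which is not already a standard form admits a move strictly decreasing $c$. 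Note that for the stated conclusion I only need \emph{termination} of the system, not confluence: termination already gives that standard forms span, and the bound $\dim\mathscr{P}_{n,\pm}\le n!$ is a separate counting statement, so uniqueness of the normal form is not required.

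First I would dispose of the easy reductions. A closed loop is removed at the cost of the scalar $\delta$ via \eqref{circpara}, lowering the loop count. If some copy of $S$ is \emph{cappable}, i.e.\ after isotopy it appears in one of the configurations on the left of the $1\to0$ move \eqref{Equ:H-I0}, then applying that relation rewrites $D$ as genuine lower terms, strictly decreasing the generator count. Iterating, I may assume $D$ has no closed loops and no cappable generator; call such a $D$ \emph{reduced}. The real content is then that a reduced diagram which is not already standard can still be simplified.

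This is where the $2\leftrightarrow2$ moves enter and where the main difficulty lies. The unshaded and shaded moves each rewrite a local $I$-configuration of two adjacent generators as a nonzero multiple of the corresponding $H$-configuration plus lower terms, and both shadings are genuinely needed because the alternating boundary of the $n$-box forces both local colourings to occur. I would fix a Morse (height) function on the disk, place the generators at distinct heights, and let $\mu(D)$ record how far this height configuration is from a prescribed canonical order, for instance as an inversion count of the associated height word or as a taut/geodesic defect of the strand system. The crux is to establish two facts: (i) every reduced, non-standard $D$ contains an $I$-configuration to which \eqref{Equ:H-I1} or \eqref{Equ:H-I2} applies in the direction $I\to aH$, respectively $I\to bH$; and (ii) each such application strictly decreases $\mu$ while leaving the generator and loop counts unchanged, its only side effect being lower terms already absorbed by the outer induction. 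Producing a $\mu$ with these two properties, which is exactly the termination of the rewriting system, is Thurston's key combinatorial input and is the step I expect to be hardest; it is also the only place where the hypotheses $a\neq0$, $b\neq0$ are used, since they guarantee the $I\to H$ direction is always legitimate.

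Finally I would identify the irreducible diagrams. A reduced diagram to which no $2\leftrightarrow2$ move applies has all its generators in the canonical order, and such a configuration is rigid: the underlying strand pattern is determined by a connection of the $2n$ boundary points, and the canonical placement of the generators along the height function is then forced. Matching these data with a combinatorial set of cardinality $n!$ (the permutation, or positive permutation-braid, patterns, consistent with the generic dimension $n!$ of the model $\mathscr{P}_\bullet^H(q,r)$) gives an injection of the set of standard forms into a set of size $n!$. Combined with the spanning statement produced by the termination of the reduction algorithm, this yields simultaneously that $\mathscr{P}_{n,\pm}$ is spanned by the standard forms and that $\dim\mathscr{P}_{n,\pm}\le n!$.
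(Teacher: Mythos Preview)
The paper does not prove this statement at all: it is quoted verbatim as a result of Thurston and attributed to \cite{Thu04}, with no argument supplied. There is therefore no ``paper's own proof'' to compare your proposal against; the authors simply import the theorem and its corollary as black boxes.

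That said, your outline is a faithful sketch of Thurston's actual strategy in \cite{Thu04}: set up a complexity on labelled tangles, use the $1\to 0$ move to strip cappable generators, use the two $2\leftrightarrow 2$ moves (one per shading) to reorder the remaining generators toward a canonical height pattern, and then count the irreducible configurations. You are also correct that only termination, not confluence, is needed for the spanning statement, and that $a\neq 0$, $b\neq 0$ are exactly what make the $I\to H$ direction available. The one place where your write-up is genuinely incomplete rather than merely terse is the construction of the secondary measure $\mu$ and the verification that an $I\to H$ move strictly decreases it; you flag this yourself as ``Thurston's key combinatorial input,'' and indeed in \cite{Thu04} this is done via a concrete height/domino argument that you have not reproduced. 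Similarly, the bijection between standard forms and a set of size $n!$ is asserted rather than exhibited. So as a proof this is a correct plan with the two substantive lemmas (termination of $\mu$ and the $n!$ count) deferred to the cited source, which is exactly the status the theorem has in the present paper.
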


\begin{corollary}[Thurston, \cite{Thu04}]\label{Cor:24-basis}
	In the generic case, namely $\mathscr{P}_{4,\pm}=24$, we have the basis of the 4-box space given by the standard form as follows:
\itemize{
\item 14 Temperley-Lieb diagrams;

\item 8 diagrams in the annular consequences, which we denote by ${AC}$;

\item 2 diagrams with two generators: \graa{I} and \graa{SI}.
}
Moreover, one can replace them by the other diagrams with two generators using Thurston's relation \eqref{Equ:H-I1}, \eqref{Equ:H-I2}.
\end{corollary}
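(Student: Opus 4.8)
The plan is to derive the statement from the preceding theorem of Thurston by a bookkeeping of the standard forms according to the number of generators each contains, followed by one elementary linear-algebra observation. By that theorem the standard forms span $\mathscr{P}_{4,\pm}$ and $\dim\mathscr{P}_{4,\pm}\le 4!=24$. Since we are in the generic case $\dim\mathscr{P}_{4,\pm}=24$, it suffices to exhibit $24$ standard forms: a spanning set whose cardinality equals the dimension of the space is automatically linearly independent, hence a basis. So the whole content reduces to identifying and counting the standard forms.

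First I would count the standard forms with no generator. These are exactly the non-crossing pairings of the $8$ boundary points of a $4$-box, i.e.\ the Temperley--Lieb diagrams, which are enumerated by the Catalan number $C_4=\tfrac{1}{5}\binom{8}{4}=14$. Next I would count the standard forms containing exactly one generator $S$. By the $1\to 0$ move \eqref{Equ:H-I0}, any diagram in which a pair of legs of $S$ is capped off is a lower term; hence in a standard form all six legs of $S$ must reach the boundary. Thus six of the eight boundary points are attached to the legs of $S$, while the remaining two boundary points are joined by a single strand. Planarity forces those two points to be cyclically adjacent, and once their position is fixed the matching of the remaining six boundary points to the six (cyclically ordered) legs of $S$ is the unique non-crossing one. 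Running over the admissible positions of this cap, compatible with the alternating boundary condition, yields the $8$ annular consequences $AC$. Finally, the reduction procedure underlying Thurston's theorem eliminates all diagrams with three or more generators, and the $2\leftrightarrow 2$ moves \eqref{Equ:H-I1} and \eqref{Equ:H-I2} leave, in each shading, a single irreducible two-generator standard form, namely $\graa{I}$ and $\graa{SI}$. Adding up, $14+8+2=24$, which identifies the basis.

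For the ``moreover'' statement, note that the coefficients $a,b$ in \eqref{Equ:H-I1} and \eqref{Equ:H-I2} are nonzero, so one may solve
\[
\graa{H}=a^{-1}\bigl(\,\graa{I}-(\text{lower terms})\,\bigr),\qquad
\graa{SH}=b^{-1}\bigl(\,\graa{SI}-(\text{lower terms})\,\bigr),
\]
where the lower terms lie in the span of the $14$ Temperley--Lieb diagrams and the $8$ annular consequences. Hence replacing $\graa{I},\graa{SI}$ by $\graa{H},\graa{SH}$ (or by any of the other two-generator diagrams linked to them through the $2\leftrightarrow2$ moves) again yields a basis.

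The main obstacle I expect is the one-generator count: verifying that the only standard forms containing a single $S$ are the uncappable annular consequences, and pinning their number down as exactly $8$, requires a careful check of planarity together with the alternating shading/orientation boundary condition, which is where the bookkeeping is most delicate. The remaining pieces---the Temperley--Lieb count and the suppression of diagrams with $\ge 3$ generators---are routine once Thurston's theorem is granted.
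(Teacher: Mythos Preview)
Your proposal is correct and is exactly the natural unpacking of the corollary from Thurston's theorem; the paper itself states this result without proof, attributing it to \cite{Thu04}, so there is no alternative argument to compare against. Your counting argument (Catalan $C_4=14$ for Temperley--Lieb, $8$ annular consequences from the uncapped placements of $S$, and $2$ two-generator standard forms) together with the dimension hypothesis $\dim\mathscr{P}_{4,\pm}=24$ is precisely how one deduces the basis from the spanning statement in Thurston's theorem.

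One small remark: the step where you assert that ``the reduction procedure underlying Thurston's theorem eliminates all diagrams with three or more generators, and the $2\leftrightarrow2$ moves leave, in each shading, a single irreducible two-generator standard form'' is the only place you are genuinely invoking the content of \cite{Thu04} rather than arguing from scratch. This is entirely appropriate for a corollary, but be aware that this is where the work lies---it is not obvious a priori from the relations \eqref{Equ:H-I1}, \eqref{Equ:H-I2} alone that, say, a triangle of three generators in a $4$-box can always be reduced; that requires Thurston's normal-form algorithm. Since you explicitly flag this as coming from the preceding theorem, your argument is complete as a proof of the corollary.
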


Thurston's relation look similar to the 6j symbol in a monoidal category, but it provides a much better evaluation algorithm. The 6j symbols in a monoidal category are the coefficients of the change of basis matrix in the hom spaces, and a monoidal category is determined by the 6j symbols up to monoidal equivalence, however it seems hopeless to determine the 6j symbols in general.
Actually for the example $\mathscr{P}_\bullet^H(q,r)$ appearing in our classification, it is difficult to compute the 6j symbols. Thurston's relation only requires partial 6j symbols for two objects, together with the data for the dual planar algebra.
This combination, rather than considering only one side, appears to be powerful, and determines the planar algebra completely.

The main purpose of this paper is to classify all subfactor planar algebras generated by a 3-box $S$ satisfying Thurston's relation.
We give the classification for the generic case in \S \ref{Sec:generic} and for the reduced case in \S \ref{Sec:reduced}.
Since the subfactor with index at most $4$ is classified, we only need to consider the case $\delta^2> 4$.
Actually, $E_6$ and extended $E_6$ subfactor planar algebras are generated by a 3-box with Thurston's relation. The dimension of their 4-box spaces are 21 and 22 respectively.

\section{Generic case}\label{Sec:generic}
In this section, we classify subfactor planar algebras $\mathscr{P}_{\bullet}$ generated by a 3-box $S$ with Thurston's relation, such that $\mathscr{P}_{4,\pm}=24$.
In this case, we have $\delta>2$ and $\mathscr{P}_{3,\pm}=6$.
Most results in this section also work for case $\mathscr{P}_{4,\pm}=23$.
\subsection{Generators}

\begin{notation}
Suppose $\mathscr{P}_{\bullet}$ is a spherical planar algebra.
We use the following notations: $e_n$ is the $n^{\rm th}$ Jones projection; $f_n$ is the $n^{\rm th}$ Jones-Wenzl idempotent; and $\mathscr{I}_{n,\pm}$ is the basic construction ideal in $\mathscr{P}_{n,\pm}$.
\end{notation}

Since $\dim(\mathscr{P}_{3,+})=6$, there exists two minimal idempotents $P$ and $Q$
in $\mathscr{P}_{3,+}/\mathscr{I}_{3,+}$ and $P+Q=f_3$. Since $tr(f_3)\neq 0$, we assume that $tr(Q)\neq 0$.
We take $S=\gamma P-Q$, where $\gamma=\frac{tr(P)}{tr(Q)}$, as the generator for $\mathscr{P}_\bullet$. Then $S$ has the following relations:

\begin{enumerate}
	\item
	$S$ is totally uncappable, i.e,
\begin{align} \label{Equ:S1}
0&=\grb{Uncappable1}=\grb{Uncappable2}=\cdots=\grb{Uncappable3}=\grb{Uncappable4}.
\end{align}
	\item
	$S$ is an eigenvector of the (2-click) rotation $\rho$. i.e
\begin{align} \label{Equ:S2}
 \mathcal{\rho}(S)&=\omega S, \text{~and~} \omega^3=1.
\end{align}
	\item
	$S$ satisfies a quadratic relation:
\begin{align} \label{Equ:S3}
S^2&=(\gamma-1)S+\gamma f_3.
\end{align}
\end{enumerate}

Note that Property (1) is $(1\rightarrow0)$ move in Thurston's relation.

Now we focus on the $f_2$-cutdown of $\Pl_{n,+}$ instead of the entire subfactor planar algebra.
Technically, this reduces the dimension of the $n$-box space and simplify the computation.
Elements in the $n$-box cutdown space will be elements $x\in \Pl_{n,+}$ of the form
$$\grb{CUT} \;.$$
Then the $f_2$-cutdown of $\Pl_{3,+}$ has a basis $\{S, \graa{TLTL}\}$.
We can consider the elements in the $f_2$-cutdown as morphisms in the $N-N$ bimodule category (the even part of the subfactor planar algebra).
In terms of $N-N$ bimodule maps, we rewrite $f_2$ as a single string labelled by $f_2$, and we ignore the label if there is no confusion.
In this setting, we express $S$ and $\graa{TLTL}$ as $N-N$ bimodule maps as follows:
\begin{align}
\gra{TL}&:=\graa{TLTL} \;,\\
\gra{wS}&:=\graa{SCutdown}.
\end{align}
where the position of $S$ indicates the position of the $\$$ sign.

\subsection{Relations in 3-boxes}\label{Section:3box}
Now let us set up and simplify the formal variables for the relations of \gra{TL} and \gra{wS} in $\Pl_{3,+}$

\begin{lemma}\label{skein}

We have the following skein relations in the $f_{2}$ cut-down in terms of $\delta,\omega,\gamma$ and one new parameter $\varepsilon$: \\
\begin{itemize}
\item[(i)]$~~~\gra{S}=\omega\gra{wS};$\\
\item[(ii)]$~~\gra{BOX1S}=\gra{BOX1TL}=0;$\\
\item[(iii)]$~\gra{BOX2STL}=0;$\\
\item[(iv)]$~~\gra{BOX2TLTL}=\frac{\delta^2-2}{\delta}\gra{VI};$\\
\item[(v)]$~~~\gra{BOX2SS}=\gamma\frac{\delta^3-2\delta}{\delta^2-1}\gra{VI};$\\
\item[(vi)]$~~\graa{BOX3TLTLTL}=\frac{\delta^2-3}{\delta}\graa{TLTRIV};$\\
\item[(vii)]$~\graa{BOX3STLTL}=-\frac{1}{\delta}\graa{SV};$\\
\item[(viii)]$\graa{BOX3TLSS}=(\gamma-1)\graa{SV}-\gamma\frac{\delta}{\delta^2-1}\graa{TLTRIV};$\\
\item[(ix)]$~~\graa{BOX3SSS}=\delta_{\omega,1}\varepsilon\graa{SV}+\gamma(\gamma-1)\frac{\delta^2}{\delta^2-1}\graa{TLTRIV}$, where $\varepsilon\in\mathbb{C}$ is an undetermined variable.
\end{itemize}
\end{lemma}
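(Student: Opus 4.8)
The plan is to establish the nine identities in the order listed, each built on the three defining properties \eqref{Equ:S1}, \eqref{Equ:S2}, \eqref{Equ:S3} of the generator together with Temperley--Lieb calculus and the trace formula of Theorem~\ref{traceformula}. Identity (i) is simply the translation of the rotation relation \eqref{Equ:S2} into the cutdown picture: the two diagrams differ by a two-click rotation, so $S$ and its rotate are proportional with factor $\omega$. Identities (ii) and (iii) are vanishing statements, each asserting that a single cap annihilates the diagram. For the terms containing $S$ this is immediate from total uncappability \eqref{Equ:S1}; for the purely Temperley--Lieb term in (ii) it is the defining property that a cap kills the Jones--Wenzl idempotent $f_2$; and in (iii) the mixed cap produces a capped copy of $S$, hence vanishes again by \eqref{Equ:S1}.

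The two purely Temperley--Lieb identities (iv) and (vi) are a direct computation in $\TL$. I would expand the $f_2$ (and, for (vi), the $f_3$) idempotents into the Temperley--Lieb basis, resolve every resulting closed loop into a factor of $\delta$, and collect; the rational coefficients $\frac{\delta^2-2}{\delta}$ and $\frac{\delta^2-3}{\delta}$ are exactly the loop counts produced this way.

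The three identities (v), (vii) and (viii), in which a single generator survives, are computed by combining the quadratic relation \eqref{Equ:S3} with partial traces. Since the $2$-box and $3$-box cutdown spaces are spanned respectively by $\gra{VI}$ and by $\{\graa{SV},\graa{TLTRIV}\}$, each composition is a linear combination of these, and the coefficients are recovered by pairing against the basis and evaluating the resulting closed diagrams. Uncappability \eqref{Equ:S1} kills every contribution in which an $S$ acquires a cap, so the only surviving scalars come from the $\gamma f_3$ summand of \eqref{Equ:S3}; its partial trace into the cutdown is computed from the Temperley--Lieb traces $\mathrm{tr}(f_2)=\delta^2-1$ and $\mathrm{tr}(f_3)=\delta^3-2\delta$ supplied by Theorem~\ref{traceformula}, which yields the coefficients $\gamma\frac{\delta^3-2\delta}{\delta^2-1}$, $-\frac{1}{\delta}$, $(\gamma-1)$ and $-\gamma\frac{\delta}{\delta^2-1}$. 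In particular (viii) is not independent: it follows from (iii), (v) and (vii) by substituting \eqref{Equ:S3} into a stacked pair of $S$'s.

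The crux, and the only place a genuinely new parameter appears, is (ix). Applying \eqref{Equ:S3} to a stacked pair among the three $S$'s reduces the $\graa{TLTRIV}$-component exactly as in (viii): it comes entirely from the $\gamma f_3$ term and evaluates to $\gamma(\gamma-1)\frac{\delta^2}{\delta^2-1}$ by Theorem~\ref{traceformula}. The coefficient of $\graa{SV}$, however, is not pinned down by \eqref{Equ:S1}, \eqref{Equ:S3} or the Temperley--Lieb relations, so I would record it as an undetermined scalar $\alpha$. The role of \eqref{Equ:S2} is to constrain it: by the three-fold symmetry of the configuration together with \eqref{Equ:S2}, $\graa{BOX3SSS}$ is a $\rho$-eigenvector with eigenvalue $\omega^3=1$, whereas $\rho$ acts on $\graa{SV}$ with eigenvalue $\omega$ and fixes $\graa{TLTRIV}$. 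Comparing the two sides under $\rho$ forces $\alpha(\omega-1)=0$, so $\alpha$ vanishes unless $\omega=1$; writing $\alpha=\delta_{\omega,1}\varepsilon$ records precisely this dichotomy, exhibiting $\varepsilon$ as a free invariant when $\omega=1$ and as absent otherwise. I expect the main obstacle to be the careful verification that $\graa{BOX3SSS}$ is indeed $\rho$-invariant with the claimed phase and that no further relation secretly determines $\varepsilon$; everything else reduces to bookkeeping with \eqref{Equ:S3} and loop counting.
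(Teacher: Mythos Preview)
Your proposal is correct and follows essentially the same approach as the paper's (very terse) proof, which simply cites Relations \eqref{Equ:S1}--\eqref{Equ:S3} and says the coefficients in (viii) and (ix) are obtained by inner product against the orthogonal basis $\{\graa{TLTRIV},\graa{SV}\}$. Your rotation argument for the vanishing of the $\graa{SV}$-coefficient in (ix) when $\omega\neq 1$ is exactly what makes that inner product zero (rotation-invariance of the trace forces orthogonality of distinct $\rho$-eigenspaces), so you have made explicit what the paper leaves implicit; your only overreach is worrying whether some hidden relation pins down $\varepsilon$---the lemma does not assert this, and indeed $\varepsilon$ is determined later from $4$-box relations.
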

\begin{proof}
The relations follow from Relations \ref{Equ:S1}--\ref{Equ:S3}.
Since \gra{TL} and \gra{wS} form an orthogonal basis of the $f_2$-cutdown of $\Pl_{3,+}$, one obtains the coefficients in (viii) and (ix) by computing the inner product.
\end{proof}

\subsection{Relations in 4-boxes}\label{Section:4box}
We proceed to discuss the relations in the $f_2$-cutdown of $\mathscr{P}_{4,+}$.

\begin{notation}
We define
$$\graa{TLC}:=\graa{C} \;.$$
\end{notation}

\begin{lemma}\label{Lem:Generic-cutdown-4-basis}
The set $B$ defined as
$$
\left\{\graa{BOX4TLSTLS},\graa{ISS},\graa{HTLS},\graa{HSTL},
\graa{ISTL},\graa{ITLS},\graa{TLID},\graa{TLE},\graa{TLC}\right\}
$$
forms a basis of the $f_2$-cutdown of $\Pl_{4,+}$.
\end{lemma}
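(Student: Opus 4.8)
The plan is to establish that $B$ is a basis in two steps: first that $B$ spans the $f_2$-cutdown of $\mathscr{P}_{4,+}$, and then that its nine elements are linearly independent. Together these also reprove that the cutdown has dimension $9$, as recorded in the dimension sequence $1,0,1,2,9,\dots$.

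First I would prove spanning. By Corollary~\ref{Cor:24-basis} the space $\mathscr{P}_{4,+}$ has the explicit $24$-element standard-form basis: fourteen Temperley--Lieb diagrams, the eight annular consequences in $AC$, and the two two-generator diagrams $\graa{I}$ and $\graa{SI}$. Since the $f_2$-cutdown is a linear map, its image is spanned by the cutdowns of these $24$ diagrams, so it suffices to check that each such cutdown lands in $\operatorname{span}(B)$. I would organize this by the number of copies of $S$. The fourteen Temperley--Lieb diagrams contain no $S$, and after applying the defining relation $f_2 e_1 = 0$ and the bubble reductions (iv), (vi) of Lemma~\ref{skein} they cut down to combinations of $\graa{TLID}$, $\graa{TLE}$, $\graa{TLC}$. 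Each annular consequence carries a single $S$ and, using relations (iii) and (vii), reduces to a combination of the four one-generator diagrams $\graa{HTLS}$, $\graa{HSTL}$, $\graa{ISTL}$, $\graa{ITLS}$. Finally the cutdowns of $\graa{I}$ and $\graa{SI}$ yield the two two-generator diagrams $\graa{BOX4TLSTLS}$ and $\graa{ISS}$. This step is routine but requires careful bookkeeping of the $f_2$-projections.

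For linear independence I would compute the $9\times 9$ Gram matrix $G_{ij}=\langle b_i, b_j\rangle$ with respect to the Markov trace and show $\det G \not\equiv 0$ in the generic regime $\delta > 2$. The crucial simplification comes from uncappability~\eqref{Equ:S1}: pairing a purely Temperley--Lieb element against a one-generator element closes up to a single copy of $S$ sitting in a non-crossing environment, where planarity forces an adjacent pair of legs of $S$ to be capped, so the diagram vanishes. Hence the mixed zero-generator/one-generator blocks of $G$ are zero, and $G$ acquires a block form indexed by generator number. The $3\times 3$ zero-generator block is the Temperley--Lieb Gram matrix, which is nondegenerate for $\delta > 2$; the $4\times 4$ one-generator block involves only pairs of $S$'s, so it is evaluated via relations (i), (iii), (v) of Lemma~\ref{skein} and is nondegenerate for generic parameters (in particular $\gamma\neq 0$, $\delta>2$).

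The hard part is the two-generator block together with its Schur complement against the lower blocks. Pairing $\graa{BOX4TLSTLS}$ and $\graa{ISS}$ with each other and with the seven lower diagrams closes up to configurations with three or four stacked copies of $S$, which must be reduced by repeated use of the quadratic relation~\eqref{Equ:S3}, the rotation eigenvalue~\eqref{Equ:S2}, and relations (viii), (ix) of Lemma~\ref{skein}; note that the undetermined parameter $\varepsilon$ enters precisely here, through the three-$S$ relation (ix) in the case $\omega = 1$. The resulting entries are rational functions of $\delta,\omega,\gamma$ (and of $\varepsilon$ when $\omega=1$), and the main obstacle is to confirm that the full determinant, after all these reductions, does not vanish identically for any admissible parameter values with $\delta > 2$ --- equivalently, that the two two-generator diagrams remain independent modulo the seven lower ones. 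Once $\det G \neq 0$ is verified, $B$ is linearly independent, and combined with the spanning step it is a basis of the $f_2$-cutdown of $\mathscr{P}_{4,+}$.
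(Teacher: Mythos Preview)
Your spanning argument is essentially the paper's, just with more detail; that part is fine.

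For linear independence, however, you take a genuinely different and much harder route than the paper, and your outline has a real gap. The paper does not compute any Gram matrix. Instead it uses the generic hypothesis $\dim\mathscr{P}_{4,+}=24$ directly: each element of $B$ is, after all, a concrete element of $\mathscr{P}_{4,+}$ (the $f_2$-cutdown just multiplies on all four sides by the known Temperley--Lieb element $f_2$), so one can expand a hypothetical relation
\[
a\,\graa{BOX4TLSTLS}+b\,\graa{ISS}+\cdots+i\,\graa{TLC}=0
\]
in the 24-element standard-form basis of Corollary~\ref{Cor:24-basis}. Because $f_2=\mathrm{id}-\tfrac{1}{\delta}e_1$ is explicit, this expansion is straightforward, and one simply reads off that the coefficient of $\gra{SH}$ is $a$, the coefficient of $\gra{I}$ is $b$, and so on down the list. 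Linear independence then falls out in three lines, with no trace computations and no appearance of $\varepsilon$ whatsoever.

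By contrast, your Gram-matrix computation forces you to evaluate closed diagrams with three and four copies of $S$, which drags in relation~(ix) of Lemma~\ref{skein} and hence the undetermined scalar $\varepsilon$. Your determinant is then a function of $(\delta,\gamma,\omega,\varepsilon)$, and you only claim it ``does not vanish identically''. That is not enough: $\varepsilon$ is not a free parameter but a specific (as yet unknown) number fixed by the planar algebra, and its value is only pinned down \emph{later}, in \S\ref{Section:4box}, using the $2\leftrightarrow 2$ move---which in turn relies on already having the basis $B$. So either you must show the determinant is nonzero for \emph{every} value of $\varepsilon$ (which you have not argued and which is not obvious), or your argument is circular. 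The paper's expansion-in-the-ambient-basis trick sidesteps this entirely and is the intended shortcut.
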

\begin{proof}
Note that the $f_2$-cutdown of the 24 diagrams in the basis of $\mathscr{P}_{4,+}$ in Corollary \ref{Cor:24-basis} is in the linear span of $B$, thus $f_2$-cutdown of $\Pl_{4,+}$ is spanned by $B$.

Suppose
	\begin{multline}\label{Equ:basis}
	a\graa{BOX4TLSTLS}+b\graa{ISS}+c\graa{HTLS}+d\graa{HSTL}+e\graa{ISTL}\\
	+f\graa{ITLS}+g\graa{TLID}+h\graa{TLE}+i\graa{TLC}=0,
	\end{multline}
for some $a,b,c,d,e,f,g,h,i\in\mathbb{C}$.
	Recall that each string in Equation \eqref{Equ:basis} represents \gra{JW} in $\mathscr{P}_{\bullet}$.
We consider it as an equation in $\mathscr{P}_{4,+}$ and rewrite it in terms of the basis of $\mathscr{P}_{4,+}$ in Corollary \ref{Cor:24-basis}.
The coefficients of $\gra{SH}$ and $\gra{I}$ are $a$ and $b$ respectively, thus $a=b=0$.
Furthermore, the coefficients of  $\scalebox{0.8}{\gra{PHTLS}}$, $\scalebox{0.8}{\gra{PHSTL}}$, $\gra{PISTL}$ and $\gra{PITLS}$ are $c$, $d$, $e$ and $f$ respectively, thus $c=d=e=f=0$. Finally, the coefficients of $\gra{PTLID}$, $\gra{PTLE}$ and $\gra{PC}$ are $g$, $h$, and $i$ respectively, thus $g=h=i=0$.
Therefore $B$ is linearly independent, and $B$ is a basis.
\end{proof}

Since $B$ is a basis, we have the unshaded $2\leftrightarrow2$ move \eqref{Equ:H-I1} for $S$ in terms of formal variables:

\begin{align}
\graa{HSS}&=a\graa{ISS}+b_1\graa{HSTL}+b_2\graa{ISTL}+ \nonumber\\
&+b_3\graa{HTLS}+b_4\graa{ITLS}+c_1\graa{TLID}+c_2\graa{TLE}+d\graa{TLC} \;. \label{Equ:relation}
\end{align}

Using the rotational symmetry, we can immediately simplify the formal variables.

\begin{lemma}\label{Lem:a}
Either $a=1$, and
\begin{align}
\graa{HSS}&=\graa{ISS}+b\left(\graa{HSTL}-\graa{ITLS}+\graa{HTLS}-\graa{ISTL}\right)+ \nonumber \\
	&+c\left(\graa{TLID}-\graa{TLE}\right) \;. \label{Equ:a=1}
\end{align}
Or $a=-1$, and
\begin{align}
\graa{HSS}&=-\graa{ISS}+b\left(\graa{HSTL}+\graa{ITLS}+\graa{HTLS}+\graa{ISTL}\right)+ \nonumber\\
	&+c\left(\graa{TLID}+\graa{TLE}\right)+d\graa{TLC}. \label{Equ:a=-1}
\end{align}
\end{lemma}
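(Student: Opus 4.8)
The plan is to play the rotational eigenvalue relation $\rho(S)=\omega S$ of \eqref{Equ:S2} off against the symmetries of the $4$-box in order to pin down the nine formal coefficients in \eqref{Equ:relation}. Two symmetries are relevant: a symmetry $\Phi$ that interchanges the $H$- and $I$-type connections while keeping the two generator slots in place, and a symmetry $\Psi$ that interchanges the two generator slots while preserving the $H/I$-type. Because $S$ is a $\rho$-eigenvector and the cutdown identity $\graa{S}=\omega\graa{wS}$ of Lemma \ref{skein}(i) lets one absorb the resulting powers of $\omega$, I expect both to descend to signed permutations of the basis $B$ of Lemma \ref{Lem:Generic-cutdown-4-basis}: $\Phi$ swaps $\graa{HSS}\leftrightarrow\graa{ISS}$, $\graa{HSTL}\leftrightarrow\graa{ISTL}$, $\graa{HTLS}\leftrightarrow\graa{ITLS}$, $\graa{TLID}\leftrightarrow\graa{TLE}$ and fixes $\graa{TLC}$; while $\Psi$ fixes $\graa{HSS}$, $\graa{ISS}$ and the purely Temperley--Lieb diagrams $\graa{TLID}$, $\graa{TLE}$, $\graa{TLC}$, and swaps $\graa{HSTL}\leftrightarrow\graa{HTLS}$ and $\graa{ISTL}\leftrightarrow\graa{ITLS}$.

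First I would apply $\Phi$ to \eqref{Equ:relation}. Writing the relation as $\graa{HSS}=a\graa{ISS}+R$, with $R$ the remaining seven-term combination, its image reads $\graa{ISS}=a\graa{HSS}+\Phi(R)$; eliminating $\graa{ISS}$ and using that neither $R$ nor $\Phi(R)$ involves $\graa{HSS}$ gives $(1-a^2)\graa{HSS}=a\Phi(R)+R$. Since $B$ is a basis, the coefficient of $\graa{HSS}$ forces $a^2=1$, so $a=\pm1$, and moreover $\Phi(R)=-aR$, i.e.\ $R$ lies in the $(-a)$-eigenspace of the involution $\Phi$.

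Next I would combine this with the $\Psi$-invariance that results from applying $\Psi$ to \eqref{Equ:relation}, which fixes the two leading terms and hence gives $R=\Psi(R)$. For $a=1$ the constraint $\Phi(R)=-R$ supports $R$ on the antisymmetric vectors $\graa{HSTL}-\graa{ISTL}$, $\graa{HTLS}-\graa{ITLS}$ and $\graa{TLID}-\graa{TLE}$; in particular the $\Phi$-fixed diagram $\graa{TLC}$ cannot occur, which is exactly $d=0$. Imposing $\Psi(R)=R$ then equates the two $H$-coefficients and the two $I$-coefficients, collapsing $R$ to $b\bigl(\graa{HSTL}-\graa{ITLS}+\graa{HTLS}-\graa{ISTL}\bigr)+c\bigl(\graa{TLID}-\graa{TLE}\bigr)$, which is \eqref{Equ:a=1}. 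For $a=-1$ the same bookkeeping with $R$ now $\Phi$-symmetric yields the all-plus combination and permits the invariant diagram $\graa{TLC}$ to survive with a free coefficient, giving \eqref{Equ:a=-1}.

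The main obstacle is the first paragraph, not the ensuing linear algebra: one must verify that $\Phi$ and $\Psi$ genuinely act on the nine diagrams as stated. This means carefully tracking the powers of $\omega$ contributed by each occurrence of $S$ when the generators are moved, and checking via Lemma \ref{skein}(i) together with $\omega^3=1$ that they cancel so that each transposition carries coefficient $+1$. The delicate point is the diagram $\graa{TLC}$: it is precisely its $\Phi$-invariance that separates the two cases and forces $d=0$ when $a=1$, so confirming that $\graa{TLC}$ is fixed (rather than sent to a combination involving $\graa{TLID}$ and $\graa{TLE}$) is the crux of the argument.
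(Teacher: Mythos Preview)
Your strategy is the paper's strategy: exploit the $\frac{\pi}{2}$-rotational symmetry of the $4$-box to constrain the nine coefficients in \eqref{Equ:relation}, and then read off the two cases from an eigenspace decomposition. The paper does this by applying the rotation $k=0,1,2,3$ times and either subtracting (for $a=-1$) or taking the alternating sum and dividing by $2(1+a)$ (for $a\neq -1$); you package the same computation as ``apply $\Phi$, deduce $a^{2}=1$ and $\Phi(R)=-aR$, then intersect with the $\Psi$-fixed space''. So there is no genuinely different idea here.

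There is, however, a real issue with your description of $\Phi$. You assert that $\Phi$ is an \emph{involution} acting by the transpositions $\graa{HSTL}\leftrightarrow\graa{ISTL}$ and $\graa{HTLS}\leftrightarrow\graa{ITLS}$. The only natural planar-algebra symmetry that swaps $\graa{HSS}\leftrightarrow\graa{ISS}$ is the $\frac{\pi}{2}$ rotation, and on the four mixed diagrams it acts as the $4$-cycle
\[
\graa{HSTL}\;\to\;\graa{ISTL}\;\to\;\graa{HTLS}\;\to\;\graa{ITLS}\;\to\;\graa{HSTL},
\]
not as two disjoint transpositions (this is exactly what one reads off from the paper's equation \eqref{Equ:relation1}). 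I do not see a linear reflection in the shaded planar algebra realizing your $\Phi$; the obvious reflection is the adjoint, which is your $\Psi$. So as stated, the action you postulate for $\Phi$ is not justified.

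The good news is that nothing is lost. Take $\Phi$ to be the $\frac{\pi}{2}$ rotation, of order $4$. Your argument that $a^{2}=1$ and $\Phi(R)=-aR$ is unchanged, since it only uses that $\Phi$ swaps $\graa{HSS}\leftrightarrow\graa{ISS}$ and preserves the span of the remaining seven diagrams. Now do the eigenspace analysis for this order-$4$ operator: on the $4$-cycle the $(-1)$-eigenspace is one-dimensional, spanned by $\graa{HSTL}-\graa{ISTL}+\graa{HTLS}-\graa{ITLS}$, and the $(+1)$-eigenspace by the all-plus sum; on $\{\graa{TLID},\graa{TLE}\}$ one gets $\graa{TLID}\mp\graa{TLE}$; and $\graa{TLC}$ is fixed, so it survives only for $a=-1$. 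This is precisely \eqref{Equ:a=1} and \eqref{Equ:a=-1}. In particular your $\Psi$ becomes redundant: it is $\Phi^{2}$, and $\Psi(R)=R$ is automatic from $\Phi(R)=-aR$. The paper's ``alternating sum of four rotations divided by $2(1+a)$'' is exactly this projection onto the $(-1)$-eigenspace of $\Phi$.

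Your caution about $\omega$-factors is well placed in principle, but note that the paper's rotation argument proceeds without any $\omega$'s appearing; the point is that one is rotating the ambient $4$-box, and the two occurrences of $S$ contribute compensating phases so that the rotation genuinely permutes the nine basis diagrams with coefficient $+1$. That is the check you should carry out, and it is straightforward once you track the marked points.
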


\begin{proof}
  If $a=-1$, then we apply rotations by $\frac{\pi}{2}$ to Equation \eqref{Equ:relation} and obtain
\begin{align}
\graa{ISS}&=-\graa{HSS}+b_4\graa{HSTL}+b_1\graa{ISTL}+ \nonumber\\
&+b_2\graa{HTLS}+b_3\graa{ITLS}+c_2\graa{TLID}+c_1\graa{TLE}+d\graa{TLC} \;. \label{Equ:relation1}
\end{align}
Subtracting Equations \eqref{Equ:relation} from \eqref{Equ:relation1} yields
\begin{align}
0&=(b_4-b_1)\graa{HSTL}+(b_1-b_2)\graa{ISTL}+ \nonumber\\
&+(b_2-b_3)\graa{HTLS}+(b_3-b_4)\graa{ITLS}+(c_1-c_2)(\graa{TLID}+\graa{TLE}) \;. \label{Equ:relation2}
\end{align}
Thus $b_1=b_2=b_3=b_4$ and $c_1=c_2$ and we obtain Equation \eqref{Equ:a=-1}.

If $a\neq-1$, then we apply rotation by $\frac{k\pi}{2}, k=0,1,2,3,$ to Equation \eqref{Equ:relation} and take the alternating sum of the four resultant equations.
We have
\begin{align}
2(1+a)\graa{HSS}&=2(1+a)\graa{ISS}+\nonumber\\
&+(b_1-b_2+b_3-b_4)\left(\graa{HSTL}-\graa{ITLS}+\graa{HTLS}-\graa{ISTL}\right)+ \nonumber\\
	&+2(c_1-c_2)\left(\graa{TLID}-\graa{TLE}\right).
\end{align}
Taking the quotient by $2(1+a)$, we obtain Equation \eqref{Equ:a=1}.
\end{proof}

Furthermore, we can determine the coefficients $b,c,d$ in Lemma \ref{Lem:a} in terms of $\delta,\gamma,\omega$.

\begin{lemma}\label{Thurston}
If $a=1$, then
\begin{align}\label{Solution:a=1}
\left\{
\begin{aligned}
 b&=-(\gamma-1)\frac{\delta}{\delta^2-2+\omega+\omega^{-1}} \;,\\
 c&=-\gamma\frac{\delta}{\delta^2-1} \;.
\end{aligned}
\right.
\end{align}

If $a=-1$, then
\begin{align} \label{Solution:a=-1}
\left\{
\begin{aligned}
b&=\frac{(\gamma-1)\delta}{\delta^2-2-\omega-\omega^{-1}} \;,\\
c&=\gamma\delta(2\frac{\delta^2-2}{\delta^4-3\delta^2+1}-\frac{1}{\delta^2-1}) \;,\\
d&=-2\gamma\frac{\delta^2}{\delta^4-3\delta^2+1} \;.
\end{aligned}
\right.
\end{align}
\end{lemma}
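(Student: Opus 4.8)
The plan is to read off $b$, $c$ (and $d$) from the relation of Lemma~\ref{Lem:a} by pairing it against suitable lower-box diagrams and reducing everything to the $f_2$-cutdown of $\Pl_{3,+}$, where every closed or partially closed diagram is computable through Lemma~\ref{skein} and, if needed, the trace formula of Theorem~\ref{traceformula}. Concretely, I regard Equation~\eqref{Equ:a=1} (resp.~\eqref{Equ:a=-1}) as an identity of $N$--$N$ bimodule maps in the $4$-box cutdown and test it by capping off a through-strand pair and/or stacking a single $3$-box generator onto it. Each such operation carries the nine basis diagrams of Lemma~\ref{Lem:Generic-cutdown-4-basis} into $\Pl_{3,+}$, whose cutdown is two-dimensional with the orthogonal basis $\{S,\text{TL}\}$; matching the $S$-component and the $\text{TL}$-component of the reduced identity then produces scalar equations in the unknowns.

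First I would isolate $c$ (and, when $a=-1$, also $d$). Pairing the relation against a pure Temperley--Lieb test diagram and invoking the orthogonality relations (ii) and (iii) of Lemma~\ref{skein}, every term containing a residual uncapped $S$ drops out, so the surviving pure-$\text{TL}$ side collapses to a trace of Temperley--Lieb diagrams, which is a polynomial in $\delta$ alone. This directly fixes $c$; in the $a=-1$ case the additional independent $\text{TL}$-closure involving the crossing term fixes $d$. The denominators $\delta^2-1$ and $\delta^4-3\delta^2+1=(\delta^2-1)(\delta^2-2)-1$ appear precisely as the relevant $1\times1$ and $2\times2$ Gram determinants of the Temperley--Lieb diagrams being inverted.

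Next I would isolate $b$, the coefficient of the mixed one-$S$, one-$\text{TL}$ terms. Here I compose the relation with a single copy of $S$ and close a strand pair, so that on the left the two generators sitting in the H-configuration are forced to meet and must be resolved by the quadratic relation~\eqref{Equ:S3} together with part (viii) of Lemma~\ref{skein}, while on the right the mixed terms reduce to a scalar multiple of $S$ in $\Pl_{3,+}$; matching the $S$-components yields one linear equation for $b$. The rotation relation~\eqref{Equ:S2}, $\rho(S)=\omega S$ with $\omega^3=1$, together with its conjugate, is what injects the factors $\omega$ and $\omega^{-1}$: in the H-configuration the two generators sit at a relative $2$-click rotation, so their overlap carries $\omega+\omega^{-1}$, producing the denominator $\delta^2-2+\omega+\omega^{-1}$ when $a=1$ and $\delta^2-2-\omega-\omega^{-1}$ when $a=-1$, the sign being dictated by the symmetric versus antisymmetric combination forced in Lemma~\ref{Lem:a}.

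The main obstacle I anticipate is bookkeeping rather than anything conceptual: one must carry out the partial-trace reductions on each basis diagram correctly, tracking which closures annihilate $S$ via uncappability~\eqref{Equ:S1} and which instead activate the quadratic relation~\eqref{Equ:S3}, and above all handling the rotation factors so that the $\omega+\omega^{-1}$ terms land with the correct sign in each of the two cases. A useful consistency check is that the undetermined parameter $\varepsilon$ from Lemma~\ref{skein}(ix) never contaminates the answer: composing with one $S$ does create triple-$S$ contributions carrying $\varepsilon$ on both sides of the identity, but since $a=\pm1$ relates those triple-$S$ terms directly, they cancel upon subtraction, which is consistent with the $\varepsilon$-free formulas obtained.
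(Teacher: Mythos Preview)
Your plan is correct in spirit and would succeed, but it is more elaborate than necessary, and the paper's proof is shorter. The paper does \emph{not} separate the determination of $c$ (and $d$) from that of $b$, and in particular it never composes with an additional copy of $S$. Instead it performs a single operation: multiply the identity \eqref{Equ:a=1} (resp.\ \eqref{Equ:a=-1}) on the bottom by the trivalent Temperley--Lieb vertex~$\gra{TL}$. Every term then lands in the two-dimensional $f_2$-cutdown of $\Pl_{3,+}$, and the reduction of each basis diagram is read off directly from Lemma~\ref{skein}. For $a=1$ this gives
\[
(\gamma-1)\gra{S}-\gamma\tfrac{\delta}{\delta^2-1}\gra{TL}
=b\bigl((\omega+\omega^{-1})(-\tfrac{1}{\delta})\gra{S}-\tfrac{\delta^2-2}{\delta}\gra{S}\bigr)+c\,\gra{TL},
\]
whose $\gra{S}$-component yields $b$ and whose $\gra{TL}$-component yields $c$ simultaneously; the $a=-1$ case is handled by the same device. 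Thus the $\omega+\omega^{-1}$ contribution appears already at the TL-capping stage via parts (vi)--(viii) of Lemma~\ref{skein}, with no need for a second test involving $S$.

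The practical payoff of the paper's route is that it never touches part~(ix) of Lemma~\ref{skein}, so the undetermined parameter $\varepsilon$ simply does not enter, and no cancellation argument is required. Your proposed cancellation of $\varepsilon$ is delicate (for $a=-1$ the triple-$S$ terms on the two sides carry opposite signs, so one must check carefully that the relevant reductions of the H- and I-configurations match rather than add), whereas the paper sidesteps the issue entirely.
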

\begin{proof}
When $a=1$, applying \gra{TL} to the bottom on both sides of Equation \eqref{Equ:a=1}, we obtain $$(\gamma-1)\gra{S}-\gamma\frac{\delta}{\delta^2-1}\gra{TL}=b\left((\omega+\omega^{-1})(-\frac{1}{\delta})\gra{S}-\frac{\delta^2-2}{\delta}\gra{S}\right)+c\gra{TL}.$$
By comparing the coefficient in \gra{TL} and \gra{S}, we solve $b,c$ as in Eqution \eqref{Solution:a=1}.

When $a=-1$, we apply $\gra{TL}$ to the bottom on both sides of Equation \eqref{Equ:a=-1}.
By a similar computation, we solve $b,c,d$ as in Equation \eqref{Solution:a=-1}.
\end{proof}

We remark that the variable $\varepsilon$ in Lemma $\ref{skein}$ does not appear in the solutions \ref{Solution:a=1}, \ref{Solution:a=-1}. We can solve for $\varepsilon$ in terms of $\delta,\gamma,\omega$ by applying \gra{wS} to the bottom of both sides of Equations \eqref{Equ:a=1}, \eqref{Equ:a=-1}. We will determine $\varepsilon$ later.

\begin{theorem}\label{cor1}
	Suppose $\mathscr{P}_\bullet$ is a planar algebra generated by a non-trivial 3-box satisfying Thustron relations, then $\mathscr{P}_\bullet$ is determined by $(\delta,\gamma,\omega,a,a')$, where $\delta^2$ is the index, $\gamma$ the ratio of the trace of the two orthogonal minimal idempotents, $\omega$ is the rotation eigenvalue and $a,a'$ are the signs in the unshaded and shaded $(2\leftrightarrow2)$ moves.
\end{theorem}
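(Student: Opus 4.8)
The plan is to exploit the fact that, by Thurston's evaluation theorem, $\mathscr{P}_\bullet$ is the quotient of the universal planar algebra on a single $3$-box generator by the complete list of Thurston relations, and that this list constitutes an \emph{evaluation algorithm}: every closed diagram is reduced to a scalar by repeated application of the $1\to 0$ move, the unshaded and shaded $2\leftrightarrow 2$ moves, and the removal of closed loops. Consequently two such planar algebras are isomorphic as soon as all of these relations coincide, and the whole task reduces to checking that every coefficient occurring in every relation is a function of the five numbers $(\delta,\gamma,\omega,a,a')$.

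First I would collect the $3$-box data. Removal of loops contributes only $\delta$; the $1\to 0$ move \eqref{Equ:S1} carries no parameter; the rotation relation \eqref{Equ:S2} contributes $\omega$; and the quadratic relation \eqref{Equ:S3} contributes $\gamma$. Feeding these into the $f_2$-cutdown yields Lemma \ref{skein}, whose coefficients are explicit functions of $\delta,\omega,\gamma$ together with the single auxiliary scalar $\varepsilon$. As noted in the remark following Lemma \ref{Thurston}, $\varepsilon$ is not independent: applying \gra{wS} to the bottom of the $2\leftrightarrow 2$ relation \eqref{Equ:a=1} (resp. \eqref{Equ:a=-1}) expresses $\varepsilon$ as a function of $\delta,\gamma,\omega$. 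Hence all of Lemma \ref{skein} is already fixed by $(\delta,\gamma,\omega)$.

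Next I would pin down the unshaded $2\leftrightarrow 2$ move. Lemma \ref{Lem:Generic-cutdown-4-basis} supplies a basis of the $f_2$-cutdown of $\mathscr{P}_{4,+}$, so the move \eqref{Equ:H-I1} is determined by its expansion \eqref{Equ:relation} in that basis. Lemma \ref{Lem:a} shows that rotational symmetry forces $a=\pm1$ and collapses the remaining coefficients to $b,c$ (and $d$ when $a=-1$), and Lemma \ref{Thurston} solves these explicitly as the rational functions \eqref{Solution:a=1}, \eqref{Solution:a=-1} of $\delta,\gamma,\omega$. Thus the unshaded move is completely determined by $(\delta,\gamma,\omega,a)$. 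The shaded $2\leftrightarrow 2$ move \eqref{Equ:H-I2} is then handled by running the identical argument in the oppositely shaded (dual) planar algebra, whose generator is the Fourier transform of $S$: its index is again $\delta$, the Markov trace is rotation- and Fourier-invariant so its trace ratio is again $\gamma$, and its rotation eigenvalue is determined by $\omega$. Therefore the shaded relation is determined by $(\delta,\gamma,\omega,a')$, with $a'$ the only genuinely new datum.

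I expect the main obstacle to be precisely this last point. A priori the shaded move could introduce independent trace-ratio and rotation parameters on the dual side, and the real content of the argument is to verify that these are not free but are pinned down by $(\delta,\gamma,\omega)$ through the self-duality of the construction (invariance of the Markov trace and the determined behaviour of the rotation eigenvalue under the Fourier transform). Once that is established, every relation used by the evaluation algorithm is a function of $(\delta,\gamma,\omega,a,a')$, so the partition function of every closed diagram---and hence the isomorphism class of $\mathscr{P}_\bullet$---is determined by these five numbers.
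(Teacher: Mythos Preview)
Your proposal is correct and follows essentially the same approach as the paper: both argue that the complete list of Thurston relations has all its coefficients determined by $(\delta,\gamma,\omega,a,a')$, so that sending the generator $S$ to the corresponding generator $S'$ in any other planar algebra with the same parameters extends to a planar algebra isomorphism. The paper's proof is terser---it simply asserts that ``all the coefficients in the skein theory are determined'' and invokes the map $S\mapsto S'$---whereas you spell out how Lemmas \ref{skein}, \ref{Lem:a}, and \ref{Thurston} fix the unshaded coefficients and correctly flag the one non-automatic point, namely that the shaded $2\leftrightarrow 2$ move contributes no new continuous parameters beyond $a'$; the paper leaves this implicit.
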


\begin{proof}
  Suppose $\mathscr{P}_{\bullet}$ and $\mathscr{P}^{\prime}_{\bullet}$ are two planar algebras and there exists orthogonal minimal idempotents $P,Q\in \Pl_{\bullet}$ and $P',Q'\in \mathscr{P}^{\prime}_{\bullet}$. Then we can construct uncappable rotation eigenvectors $S$ and $S'$ as in section $3$ satisfying the same quadratic relation. Since the planar algebra is generated by a 3-box, we define a map $\phi:\mathscr{P}_3\rightarrow\mathscr{P}^{\prime}_3$ by sending $S$ to $S'$. Since all the coefficients in the skein theory are determined $(\delta,\gamma,\omega)$, $\phi$ extends to a planar algebra isomorphism.
\end{proof}

With the structure of the Thurston-relation coefficients determined, we are ready to turn to the classification of Thurston-relation planar algebras.

\subsection{Classification}\label{sec:classification}

In this section, we prove our classification result (Theorem \ref{Thm:main}) for the generic case.

In Theorem \ref{cor1}, we show that $\mathscr{P}_{\bullet}$ is determined by $\delta>2,\gamma,\omega,a$ and $a'$ for the unshaded and shaded $2\leftrightarrow 2$ moves.
First we prove that $\omega=1$ and $a,a'=1$ in the $2\leftrightarrow 2$ move.
Thus $\mathscr{P}_{\bullet}$ is determined by $\delta,\gamma$.
Then we identify  $\mathscr{P}_{\bullet}$ with $\mathscr{P}_\bullet^H(q,r)$.

\begin{lemma}\label{thm1}
If $\mathscr{P}_{\bullet}$ is a planar algebra generated by a non-trivial 3-box satisfying Thurston's relation with parameters $(\delta,\gamma,\omega,a,a')$, $\delta>2$, then the rotation eigenvalue $\omega=1$.
\end{lemma}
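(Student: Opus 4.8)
The plan is to show $\omega=1$ by exploiting consistency of the $2\leftrightarrow2$ move under rotation and the quadratic relation, and in particular by computing a quantity in two different ways so that $\omega$ is forced to be real, whence $\omega\in\{1,\zeta,\zeta^2\}$ with $\omega^3=1$ collapses to $\omega=1$. Concretely, I would start from Equation \eqref{Equ:relation}, the expansion of $\graa{HSS}$ in the basis $B$, which Lemma \ref{Lem:a} has already refined into the two normal forms \eqref{Equ:a=1} (case $a=1$) and \eqref{Equ:a=-1} (case $a=-1$), with the coefficients $b,c,d$ pinned down in Lemma \ref{Thurston} as explicit rational functions of $\delta,\gamma,\omega$. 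The key observation is that these coefficients are \emph{real} whenever the planar algebra is a subfactor planar algebra (the trace is positive, $\delta,\gamma$ are real), yet they were derived by capping with $\gra{TL}$, which only sees $\omega+\omega^{-1}$; the remaining constraint, obtained by capping with $\gra{wS}$ instead, will involve $\omega$ itself (not merely $\omega+\omega^{-1}$) and hence can detect the phase.

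First I would cap both sides of the appropriate normal form from Lemma \ref{Lem:a} with $\gra{wS}$ at the bottom, using the $3$-box relations of Lemma \ref{skein}, especially parts (vii)--(ix) which carry the factors $\delta_{\omega,1}\varepsilon$ and the $\omega$-dependent rotation data. This produces the promised equation for $\varepsilon$ in terms of $\delta,\gamma,\omega$. The point is that evaluating the left-hand side $\graa{HSS}$ capped by $\gra{wS}$ can be done in two ways: directly via the skein relations, and via the expansion, and consistency yields an identity. Because $S$ is an eigenvector of the rotation $\rho$ with $\rho(S)=\omega S$ and $\omega^3=1$, applying $\rho$ to this identity multiplies various terms by powers of $\omega$, and matching the two sides forces an algebraic constraint on $\omega$. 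I expect this constraint, combined with the positivity/reality of all structure constants forced by the subfactor condition, to rule out the primitive cube roots of unity.

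The main obstacle will be the bookkeeping in the capping-by-$\gra{wS}$ computation: one must correctly track how each of the nine basis diagrams in $B$ reduces under closing up against $S$ (equivalently $\omega^{-1}$ times $\gra{S}$ by Lemma \ref{skein}(i)), and the term $\graa{SSS}$ in Lemma \ref{skein}(ix) introduces the auxiliary variable $\varepsilon$, which couples with $\delta_{\omega,1}$ and so behaves differently according to whether $\omega=1$. The cleanest route is to suppose $\omega\neq1$, so that the $\delta_{\omega,1}\varepsilon$ term vanishes and the $\graa{SSS}$ reduction simplifies, and then derive a contradiction with the quadratic relation \eqref{Equ:S3} or with the reality of $\gamma$. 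I would also use Lemma \ref{Lem:a}, which already shows that the rotation by $\frac{\pi}{2}$ symmetrizes the coefficients; iterating the full cyclic rotation on the capped identity, together with $\omega^3=1$, should leave only $\omega=1$ consistent. If a direct contradiction is not immediate, the fallback is to extract the imaginary part of the $\varepsilon$-equation under the subfactor reality assumption and show it forces $\omega=\omega^{-1}$, hence $\omega=1$ since $\omega^3=1$ has no real root other than $1$.
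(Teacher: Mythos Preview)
Your proposal has a genuine gap. Capping the normalized $2\leftrightarrow2$ move with $\gra{wS}$ at the bottom does produce a $3$-box identity, but that identity only involves $\omega+\omega^{-1}$, not $\omega$ itself. Indeed, for $\omega$ a primitive cube root of unity one has $\omega+\omega^{-1}=-1$, so every coefficient in Lemma~\ref{Thurston} is already real, and the $3$-box reduction you propose yields (for $a=1$, $\omega\neq1$) only the single constraint
\[
\frac{\gamma\delta}{\delta^2-1}(\delta^2-3)\;=\;2(\gamma-1)^2\,\frac{\delta}{\delta^2-3},
\]
coming from matching the $\graa{SV}$-coefficient of $\graa{BOX3SSS}$ (which must vanish by Lemma~\ref{skein}(ix) when $\omega\neq1$). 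This is a quadratic in $\gamma$ with perfectly good solutions for every $\delta>2$, so no contradiction arises. Your fallback of ``extracting the imaginary part'' cannot help: all structure constants at the $3$-box level are symmetric in $\omega\leftrightarrow\omega^{-1}$, and positivity is not among the hypotheses of the lemma anyway.

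What the paper does instead is a genuinely $4$-box consistency check that \emph{does} see $\omega-\omega^{-1}$. One takes the diagram $\graa{BOX4SSTLS}$ and applies the $2\leftrightarrow2$ move in two different locations (after absorbing rotation factors via $\gra{S}=\omega\gra{wS}$). Expanding each result in the basis $B$ of Lemma~\ref{Lem:Generic-cutdown-4-basis} and comparing the coefficient of the distinguished basis vector $\graa{BOX4TLSTLS}$ gives $b\omega=b\omega^{-1}$, hence $b(\omega-\omega^{-1})=0$. If $\omega\neq1$ this forces $b=0$, which by the formula for $b$ in Lemma~\ref{Thurston} forces $\gamma=1$. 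Only \emph{after} $\gamma=1$ is established does your capping-with-$S$ computation become decisive: the $\graa{SV}$-coefficient condition above then reads $\frac{\delta(\delta^2-3)}{\delta^2-1}=0$, contradicting $\delta>2$. The case $a=-1$ is handled analogously. So your second step is essentially correct, but you are missing the first step entirely, and without it the argument does not close.
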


\begin{proof}

A direct computation using Lemma \ref{skein} shows that in general,

$$\graa{BOX4STLTLS}=(\gamma-1)\graa{HSTL}+\gamma\graa{TLID}-\frac{\gamma\delta}{\delta^2-1}\graa{TLC}-\frac{1}{\delta}\graa{ISS}$$

Suppose $a=1$,
	Notice that $\graa{BOX4SSTLS}=\omega\omega^{-1}\graa{3481}$,\\
	and apply the ($2\leftrightarrow2$) move in the dotted circle, we can rewrite with respect to the basis in Corollary \ref{Cor:24-basis}. The coefficient of \graa{BOX4TLSTLS} in the linear representation of $\graa{BOX4SSTLS}$ with respect to this basis is $b\omega^{-1}$.\\
	Similarly
	$\graa{BOX4SSTLS}=\omega\omega^{-1}\graa{3491}$,\\
	so applying the ($2\leftrightarrow2$) move in the dotted circle, we see that the coefficient of \graa{BOX4TLSTLS} is $b\omega$.

    Equating these coefficients from the two different computations yields
	
	$$b(\omega-\omega^{-1})=0.$$
	Since $\omega\neq 1$, $\omega-\omega^{-1}\neq0$, and so
    $$b=-\frac{(\gamma-1)\delta}{\delta^2-3}=0,$$
	hence $\gamma=1$.\\
	Note that from Thurston's relation, we obtain
	$$\graa{BOX3SSS}=(\frac{\gamma\delta}{\delta^2-1}(\delta^2-3)-2(\gamma-1)^2\frac{\delta}{\delta^2-3})\graa{SV}+\gamma(\gamma-1)\frac{\delta}{\delta^2-1}\graa{TLTRIV}$$
	Since the coefficient before \gra{SV} is 0 with $\omega\neq1$, we obtain $\frac{\delta(\delta^2-3)}{\delta^2-1}=0$, which contradicts the fact that $\delta>2$.\\

    Now suppose $a=-1$.  Again, we can evaluate \graa{BOX4SSTLS} as a sum of basis diagrams in two different ways as above, which will again give us that $\gamma=1$.\\
    Similarly by checking $\graa{BOX3SSS}$, we obtain
$$\frac{\delta^2-2}{\delta^2-1}=2\frac{\delta^2-2}{\delta^4-3\delta^2+1}-\frac{1}{\delta^2-1},$$
which yields the equation
$$1=2\frac{\delta^2-2}{\delta^4-3\delta^2+1}.$$
This has no real solutions with $\delta>2$.\\

This concludes the proof of the theorem.

\end{proof}

\begin{theorem}\label{thm2}
If $\mathscr{P}_{\bullet}$ is a planar algebra generated by a non-trivial 3-box satisfying Thurston's relation with parameters $(\delta,\gamma,1,a,a')$, $\delta>2$, then $a=a'=1$.
\end{theorem}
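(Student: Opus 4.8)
The plan is to rule out the value $a=-1$ (and, by the shaded–unshaded symmetry, $a'=-1$), so that only $a=a'=1$ survives. Since Lemma~\ref{thm1} already forces $\omega=1$, throughout I use the coefficients of Lemma~\ref{Thurston} specialized to $\omega=1$; in particular, for $a=-1$ one has $b=\frac{(\gamma-1)\delta}{\delta^2-4}$ together with the $c,d$ of \eqref{Solution:a=-1}. I would first reduce to the unshaded move alone: the defining relations \eqref{Equ:H-I1} and \eqref{Equ:H-I2} are interchanged by reversing the shading, which sends $\mathscr{P}_\bullet$ to its dual planar algebra $\mathscr{P}_\bullet^{\mathrm{op}}$. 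The dual is again singly generated with Thurston's relation, has the same index parameter $\delta>2$, and by Lemma~\ref{thm1} also has rotation eigenvalue $1$; its unshaded sign is precisely $a'$. Hence any argument proving $a=1$ for $\mathscr{P}_\bullet$ proves $a'=1$ when applied to $\mathscr{P}_\bullet^{\mathrm{op}}$, and it suffices to show $a=1$.

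Assume for contradiction that $a=-1$. The strategy mirrors Lemma~\ref{thm1}: evaluate a single three–generator diagram in two ways and compare against the rigid data of Lemma~\ref{skein}. The essential difference is that the rotation device of Lemma~\ref{thm1} degenerates when $\omega=1$, since it only produced the factor $\omega-\omega^{-1}$, which now vanishes; the separating information must instead come from the term special to the $a=-1$ move, namely the coefficient $d$ of $\graa{TLC}$ in \eqref{Equ:a=-1}, together with the new $3$-box parameter $\varepsilon$ of Lemma~\ref{skein}(ix), which is nonzero exactly when $\omega=1$.

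Concretely, I would first pin down $\varepsilon$ as anticipated in the remark after Lemma~\ref{Thurston}: applying $\gra{wS}$ to the bottom of \eqref{Equ:a=-1} and reducing every resulting term by Lemma~\ref{skein} expresses $\varepsilon$ as a rational function of $\delta,\gamma$ through $b,c,d$. I would then compute $\graa{BOX3SSS}$ a second, independent way — applying the $(2\leftrightarrow2)$ move to the other admissible pair of generators (the reflected configuration) and again reducing by Lemma~\ref{skein} — and match the coefficient of $\graa{SV}$ against $\delta_{\omega,1}\varepsilon$ from Lemma~\ref{skein}(ix). Because $a=-1$ forces the $\graa{TLC}$-term to be present, these two evaluations disagree unless a single polynomial identity in $\delta$, obtained after substituting the $a=-1$ values of $b,c,d$, holds. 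As in the closing step of Lemma~\ref{thm1}, this identity reduces to a rational equation whose only real roots satisfy $\delta\le 2$ (alternatively one checks that it forces $d=0$, impossible since $d=-2\gamma\frac{\delta^2}{\delta^4-3\delta^2+1}$ with $\gamma\neq0$). This contradicts $\delta>2$, so $a=1$; applying the same argument to $\mathscr{P}_\bullet^{\mathrm{op}}$ gives $a'=1$.

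The main obstacle is isolating the correct second evaluation in the $\omega=1$ regime: because the rotation trick no longer separates the two signs, one must find a genuinely independent reduction of $\graa{BOX3SSS}$ (or of a suitable $4$-box diagram) whose $\graa{SV}$-coefficient is sensitive to the sign $a$. The bookkeeping of reducing three stacked copies of $S$ by Lemma~\ref{skein} is where the computation is heaviest, and the final step requires verifying that the resulting equation in $\delta$ genuinely excludes the entire range $\delta>2$.
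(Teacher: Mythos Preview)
Your outline has a genuine gap at the point you yourself flag as the main obstacle. The ``second, independent'' evaluation of $\graa{BOX3SSS}$ that you propose --- applying the $(2\leftrightarrow2)$ move to the reflected pair of generators --- is not independent in the $a=-1$ regime. Recall how \eqref{Equ:a=-1} was obtained in Lemma~\ref{Lem:a}: one averaged \eqref{Equ:relation} over the four quarter-turn rotations, so the resulting relation is \emph{exactly} invariant under the $\pi/2$ rotation. Consequently, applying it at any of the available sites in a rotationally symmetric $3$-box diagram produces the same reduction. Both of your computations of $\graa{BOX3SSS}$ therefore agree identically; they determine $\varepsilon$ (consistent with Lemma~\ref{skein}(ix)) but impose no further constraint on $(\delta,\gamma)$. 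The parenthetical ``alternatively one checks that it forces $d=0$'' is exactly the conclusion one wants, but nothing in your $3$-box scheme produces it.

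The paper breaks the symmetry by moving up to a $4$-box diagram that is \emph{not} rotationally invariant: one with three copies of $S$ and one copy of $TL$. Applying the $(2\leftrightarrow2)$ move at two different sites now gives genuinely different expansions in the basis of Lemma~\ref{Lem:Generic-cutdown-4-basis}. Equating the coefficients of $\graa{ISS}$ (after converting the stray $\graa{HSS}$ via \eqref{Equ:a=-1}) yields
\[
-(\gamma-1)-\tfrac{2b}{\delta}\;=\;(\gamma-1)+\tfrac{2b}{\delta},
\]
which with $b=\frac{(\gamma-1)\delta}{\delta^2-4}$ forces $(\gamma-1)\frac{\delta^2-2}{\delta^2-4}=0$, hence $\gamma=1$ and $b=0$. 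Only \emph{after} this does the $3$-box reduction become decisive: with $b=0$ the two expansions simplify enough to force $d=0$, contradicting $d=-2\gamma\delta^2/(\delta^4-3\delta^2+1)\neq0$. So the missing ingredient in your plan is precisely this asymmetric $4$-box comparison that pins down $\gamma$ before any appeal to $\graa{BOX3SSS}$. Your reduction of $a'$ to $a$ via the dual planar algebra is fine.
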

\begin{proof}
    Suppose $a=-1$. First note that by \ref{thm1}, we have $\omega=1$.

	Applying ($2\leftrightarrow2$) move in the dotted circle, we obtain the relation
	$$\graa{2241}=b\graa{BOX4TLSTLS}+(-(\gamma-1)+b(-\frac{2}{\delta}))\graa{ISS}+\frac{\gamma\delta}{\delta^2-1}\graa{ISTL}$$
	$$~~~~~~~~~+(b(\gamma-1)+c)\graa{HSTL}+(b(\gamma-1)+d(-\frac{1}{\delta}))\graa{ITLS}$$
	$$~~+b\gamma\graa{TLID}+\frac{b\gamma}{\delta^2-1}\graa{TLE}+2b(-\gamma\frac{\delta}{\delta^2-1})\graa{TLC}$$

	Applying ($2\leftrightarrow2$) move in the dotted circle, we obtain
	$$\graa{2242}=b\graa{BOX4TLSTLS}+(-(\gamma-1)+b(-\frac{2}{\delta}))\graa{HSS}+\frac{\gamma\delta}{\delta^2-1}\graa{HSTL}$$
	$$~~~~~~~~~+(b(\gamma-1)+c)\graa{ISTL}+(b(\gamma-1)+d(-\frac{1}{\delta}))\graa{HTLS}$$
	$$~~+b\gamma\graa{TLE}+\frac{b\gamma}{\delta^2-1}\graa{TLID}+2b(-\gamma\frac{\delta}{\delta^2-1})\graa{TLC}.$$
	
    Let $\lambda$ be the coefficient of \graa{ISS} in the linear expansion of \graa{BOX4TLSTLS} with respect to basis.  (Note that if $\dim{\Pl_{4,+}=24}$, then $\lambda=0$. We do not use this assumption, so that the proof also work for the case $\dim{\Pl_{4,+}=23}$.)
    Equating the coefficients of $\graa{ISS}$ in the two expressions yields
	$$b\lambda-(\gamma-1)+b(-\frac{2}{\delta})=b\lambda +(\gamma-1)+b(\frac{2}{\delta}),$$

 thus
 $$-(\gamma-1)+b(-\frac{2}{\delta})=0.$$

 Note that $b=\frac{(\gamma-1)\delta}{\delta^2-4}$, when $\omega=1$, so
 $$(\gamma-1)(-1-\frac{\delta}{\delta^2-4}\frac{2}{\delta})=0$$

 hence

$$(\gamma-1)(\frac{\delta^2-2}{\delta^2-4})=0$$

$\delta>2$ implies that $\gamma=1$ and thus $b=0$. Therefore,
	$$\graa{BOX4SSTLS}=\frac{\delta}{\delta^2-1}\graa{ISTL}+c\graa{HSTL}+d(-\frac{1}{\delta})\graa{ITLS}$$ $$~~~~~~~~~~~~~=\frac{\delta}{\delta^2-1}\graa{HSTL}+c\graa{ISTL}+d(-\frac{1}{\delta})\graa{HTLS}$$
	Hence, $d=-2\gamma\frac{\delta^2}{\delta^4-3\delta^2+1}=0$. Note that this contradicts $\gamma=1$.
\end{proof}

These two theorems show that a singly genrated Thurston-relation planar algebra with parameters $(\delta,\gamma,\omega,a,a')$ satisfies $\omega=a=a'=1$. Thus the planar algebra is parameterized by $(\delta,\gamma)$. With the following lemma and theorem, we will identify any Thurston-relation planar algebra as $\mathscr{P}_\bullet^H(q,r)$ for some $(q,r)$.

\begin{lemma}\label{lemma:idem}
 	There exist two idempotents
 	$P,Q\in \mathscr{P}^H(q,r)_{3,+}$ with $P+Q=f_3$ and	
 	\begin{align}
 	tr(P)&=\frac{(r-r^{-1})(rq-r^{-1}q^{-1})(rq^{-2}-r^{-1}q^2)}{(q+q^{-1})(q-q^{-1})^3}\\
 	tr(Q)&=\frac{(r-r^{-1})(rq^2-r^{-1}q^{-2})(rq^{-1}-r^{-1}q)}{(q+q^{-1})(q-q^{-1})^3}
 	\end{align}
\end{lemma}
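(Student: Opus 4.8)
The idea is to directly construct the two minimal idempotents $P,Q$ in $\mathscr{P}^H(q,r)_{3,+}$ and then compute their traces using the trace formula of Theorem \ref{traceformula}. First I would recall that $\mathscr{P}^H(q,r)_{3,+}$ is (a quotient of) the Hecke algebra $H_3$, whose minimal idempotents over $\mathbb{C}(q,r)$ are indexed by Young diagrams with $3$ cells. There are three such diagrams: the single row $(3)$, the single column $(1,1,1)$, and the hook $(2,1)$, the last appearing with multiplicity two since its standard tableaux number two. The Jones-Wenzl idempotent $f_3$ corresponds to the totally symmetric projection onto the row $(3)$ together with the part of $H_3$ that survives after killing the basic-construction ideal $\mathscr{I}_{3,+}$. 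Concretely, $f_3$ decomposes as $P+Q$ where $P$ and $Q$ are the two minimal idempotents associated to the two standard tableaux of the hook shape $(2,1)$ (equivalently, the two copies of the corresponding irreducible inside $H_3$); these are exactly the idempotents orthogonal to $\mathscr{I}_{3,+}$, so they are the generators used in Section \ref{Sec:generic}.

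With the combinatorial bookkeeping fixed, the computation of the traces reduces to plugging the hook shape $(2,1)$ into formula \eqref{equ:traceformula}. For the three cells of $(2,1)$ the contents are $c=0,1,-1$ and the hook lengths are $h=3,1,1$, so a single application of Theorem \ref{traceformula} produces the trace of the minimal idempotent $m_{(2,1)}$; the two idempotents $P,Q$ refine this according to which of the two standard tableaux is used, and their individual traces are obtained by distributing the content/hook data of the refined (ordered) tableaux. The plan is to evaluate
\begin{align}
tr(m_{(2,1)})=\frac{(r-r^{-1})(rq-r^{-1}q^{-1})(rq^{-1}-r^{-1}q)}{(q^{3}-q^{-3})(q-q^{-1})(q-q^{-1})},\nonumber
\end{align}
and then to split this across the two tableaux so that the two numerators become $(r-r^{-1})(rq-r^{-1}q^{-1})(rq^{-2}-r^{-1}q^2)$ and $(r-r^{-1})(rq^2-r^{-1}q^{-2})(rq^{-1}-r^{-1}q)$ respectively, after simplifying the denominator $q^{3}-q^{-3}=(q+q^{-1})(q-q^{-1})(q^{-1}+1+q)\cdots$ to the stated common form $(q+q^{-1})(q-q^{-1})^{3}$. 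I would verify the algebraic identity $tr(P)+tr(Q)=tr(f_3)-tr(\text{ideal part})$ as a consistency check that the split has been done correctly.

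The main obstacle is the \emph{correct assignment of the refined trace data to the two tableaux}: the bare trace formula gives only the total $tr(m_{(2,1)})$ for the shape, whereas $P$ and $Q$ are the two distinct minimal idempotents sitting inside that isotypic component, and getting the asymmetric numerators $(rq^{-2}-r^{-1}q^{2})$ versus $(rq^{2}-r^{-1}q^{-2})$ right requires tracking the eigenvalue of the relevant Jones-Wenzl/Hecke generator on each tableau (equivalently, the order in which cells are added). This is where the rotation eigenvalue and the cut-down normalization enter, and it is the step I would spend the most care on; once the two idempotents are correctly labelled, the remaining work is the routine factorization of $q^{3}-q^{-3}$ and cancellation to reach the displayed expressions for $tr(P)$ and $tr(Q)$.
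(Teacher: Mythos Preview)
Your proposal contains a genuine conceptual error that makes the argument unworkable as stated.

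You identify $\mathscr{P}^{H}(q,r)_{3,+}$ with the Hecke algebra $H_{3}$ and then claim that $P$ and $Q$ are the two minimal idempotents attached to the two standard tableaux of the hook shape $(2,1)$. Both steps fail. First, the $n$-box space $\mathscr{P}^{H}_{n,+}$ carries \emph{alternating} boundary orientations, so it is $\operatorname{End}(V\otimes V^{*}\otimes V)$ rather than $\operatorname{End}(V^{\otimes 3})$; the Hecke algebra $H_{n}$ embeds in $\mathscr{P}^{H}_{2n,+}$ via the map $\phi_{n}$ of Proposition~\ref{embed}, not in $\mathscr{P}^{H}_{n,+}$. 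Second, and more fatally, two minimal idempotents in $H_{3}$ attached to the \emph{same} Young diagram are equivalent and therefore have \emph{equal} Markov trace; Theorem~\ref{traceformula} already gives the trace of any one of them. There is no meaningful operation of ``splitting $tr(m_{(2,1)})$ across the two tableaux'' to produce two different numbers, and indeed the asymmetric factors $(rq^{-2}-r^{-1}q^{2})$ versus $(rq^{2}-r^{-1}q^{-2})$ in the statement of the lemma show at once that $tr(P)\neq tr(Q)$, which is incompatible with your description.

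What $P$ and $Q$ actually are: they are the projections onto the two \emph{inequivalent} new irreducibles appearing in $V\otimes V^{*}\otimes V$ beyond the copies of $V$ absorbed by the basic construction ideal. At $r=q^{N}$ these are the Young diagrams $(3,1^{N-2})$ and $(2,2,1^{N-3})$, whose quantum dimensions are genuinely different. One \emph{can} recover the stated traces from Theorem~\ref{traceformula} along these lines (the paper notes this alternative in the remark following the proof), but it requires specialising to $r=q^{N}$, applying the hook--content formula to these $N$-dependent shapes, and then arguing that both sides are rational functions of $(q,r)$ agreeing on a Zariski-dense set; this is a different and longer argument than the one you sketched. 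The paper instead proceeds by direct linear algebra: it writes $Q$ in a convenient basis of $\mathscr{P}^{H}_{3,+}$ with undetermined coefficients, imposes $e_{k}Q=Qe_{k}=0$ and $Q^{2}=Q$, solves, and evaluates the trace explicitly.
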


\begin{proof}

   Consider the basis $\left\{\gra{TL3c2},\gra{TL3e},\gra{TL3f}\right\}$ of $\mathscr{P}^H(q,r)_{3,+}$.\\
 	
    First set $Q=aR+b\gra{TL3b}+c\left(\gra{TL3c1}+\gra{TL3c2}\right)+e\gra{TL3e}+f\gra{TL3f}$.
 	Since we want $Q\in\mathcal{I}(\mathscr{P}^H(q,r)_{3,+})$, comparing the coefficients with respect to $\mathcal{B}$ of $e_k Q=Q e_k=0$ for $k=1,2$ we obtain the following equations:
 	\begin{equation}
 	\begin{cases}
 	ar+c+e\frac{r-r^{-1}}{q-q^{-1}}=0\\
 	b+c\frac{r-r^{-1}}{q-q^{-1}}+f=0\\
 	ar^{-1}+c+f\frac{r-r^{-1}}{q-q^{-1}}=0\\
 	a(q-q^{-1})+b+c\frac{r-r^{-1}}{q-q^{-1}}+e=0
 	\end{cases}	
 	\end{equation}
 	Which have solutions in terms of $a,b,r$ and $q$:
 	\begin{equation}
 	\begin{cases}
 	c=a\frac{r^{-1}(q-q^{-1})^2}{r^2+r^{-2}-q^2-q^{-2}}-b\frac{(r-r^{-1})(q-q^{-1})}{r^2+r^{-2}-q^2-q^{-2}}\\
 	e=-\frac{q-q^{-1}}{r-r^{-1}}(ar+ar^{-1}\frac{(q-q^{-1})^2}{r^2+r^{-2}-q^2-q^{-2}}-b\frac{(r-r^{-1})(q-q^{-1})}{r^2+r^{-2}-q^2-q^{-2}})\\
 	f=-\frac{q-q^{-1}}{r-r^{-1}}(ar^{-1}\frac{(r-r^{-1})^2}{r^2+r^{-2}-q^2-q^{-2}}+b\frac{(r-r^{-1})(q-q^{-1})}{r^2+r^{-2}-q^2-q^{-2}})
 	\end{cases}
 	\end{equation}
 	We want $Q$ to be an idempotent, so we set up equations by comparing the coefficients with respect $\mathscr{B}$ of $Q^2$ and $Q$ and we find two possible solutions:
 	\begin{equation}
 	\begin{cases}
 	a=\frac{1}{q+q^{-1}},~b=\frac{q^{-1}}{q+q^{-1}}~~~( ~1)\\
 	a=-\frac{1}{q+q^{-1}},~b=\frac{q}{q+q^{-1}}~~~(~ 2)
 	\end{cases}
 	\end{equation}
 	Taking (1), we have
 	\begin{equation}
 	\begin{split}
 	tr(Q)=&ar(\frac{r-r^{-1}}{q-q^{-1}})^2+b(\frac{r-r^{-1}}{q-q^{-1}})^3+c(2(\frac{r-r^{-1}}{q-q^{-1}})^2)+(e+f)\frac{r-r^{-1}}{q-q^{-1}}\\
 	=&\frac{r}{q+q^{-1}}(\frac{r-r^{-1}}{q-q^{-1}})^2+\frac{q^{-1}}{q+q^{-1}}(\frac{r-r^{-1}}{q-q^{-1}})^3\\
 	&+\frac{q-q^{-1}}{q+q^{-1}}\frac{r^{-1}q-r^{-1}q}{r^2+r^{-2}-q^2-q^{-2}}(2(\frac{r-r^{-1}}{q-q^{-1}})^2)\\
 	&+(-\frac{r+r^{-1}}{q+q^{-1}}-2\frac{q-q^{-1}}{q+q^{-1}}\frac{r^{-1}q-r^{-1}q}{r^2+r^{-2}-q^2-q^{-2}})\\
 	=&\frac{r}{q+q^{-1}}(\frac{r-r^{-1}}{q-q^{-1}})^2+\frac{q^{-1}}{q+q^{-1}}(\frac{r-r^{-1}}{q-q^{-1}})^3\\
 	&-\frac{r+r^{-1}}{q+q^{-1}}+2\frac{q-q^{-1}}{q+q^{-1}}\frac{r^{-1}q-r^{-1}q}{r^2+r^{-2}-q^2-q^{-2}}((\frac{r-r^{-1}}{q-q^{-1}})^2-1)\\
 	=&\frac{1}{(q+q^{-1})(q-q^{-1})^3}(r(r-r^{-1})^2(q-q^{-1})+q^{-1}(q-q^{-1})^3\\
 	&-(r+r^{-1})(q-q^{-1})^3+2(r^{-1}q-rq^{-1})(q-q^{-1})^2)\\
 	=&\frac{(r-r^{-1})(r^2q+r^{-2}q^{-1}-q^3-q^{-3})}{(q^2-q^{-2})(q-q^{-1})}\\
 	=&\frac{(r-r^{-1})(rq^2-r^{-1}q^{-2})(rq^{-1}-r^{-1}q)}{(q+q^{-1})(q-q^{-1})^3}
 	\end{split}
 	\end{equation}
 	Applying a similar computation, we see that $P=f_3-Q$ is an idempotent with
 	$$tr(P)=\frac{(r-r^{-1})(rq-r^{-1}q^{-1})(rq^{-2}-r^{-1}q^2)}{(q+q^{-1})(q-q^{-1})^3}$$
 	We see that taking the second case (2) simply switches the role of $P$ and $Q$.
 \end{proof}
 \begin{remark}
 	Another strategy to show the above trace formula is to use the trace formula \eqref{equ:traceformula} for $q$ a root of unity, and proving that the trace formula for idempotents are rational functions on $q,r$.
 \end{remark}

We are now ready for the following theorem:

\begin{theorem}\label{Thm:main1}
	Suppose $\mathscr{P}_\bullet$ is a singly generated Thurston-relation planar algebra and $\dim{\mathscr{P}_{4,\pm}}=24$. Then it is isomorphic to the semisimple quotient of $\mathscr{P}_\bullet^H(q,r)$ for some $(q,r)$.
\end{theorem}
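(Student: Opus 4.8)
The plan is to combine the rigidity result of Theorem~\ref{cor1} with the explicit trace computation of Lemma~\ref{lemma:idem}. By Lemma~\ref{thm1} and Theorem~\ref{thm2}, the generic hypothesis $\dim\mathscr{P}_{4,\pm}=24$ (which forces $\delta>2$) gives $\omega=a=a'=1$, so by Theorem~\ref{cor1} the planar algebra $\mathscr{P}_\bullet$ is determined up to isomorphism by the single pair $(\delta,\gamma)$, where $\delta^2$ is the index and $\gamma$ the ratio of traces of the two minimal idempotents summing to $f_3$. Thus it suffices to produce a pair $(q,r)$ for which $\mathscr{P}_\bullet^H(q,r)$ is a singly generated Thurston-relation planar algebra carrying exactly these two invariants.

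First I would record that $\mathscr{P}_\bullet^H(q,r)$ is itself a Thurston-relation planar algebra: its $3$-box generator $S=\gamma P-Q$, built from the idempotents $P,Q$ of Lemma~\ref{lemma:idem} (so that $P+Q=f_3$ and $\gamma=\operatorname{tr}(P)/\operatorname{tr}(Q)$), is totally uncappable, a rotation eigenvector, and satisfies the quadratic relation \eqref{Equ:S3}, as established by Thurston~\cite{Thu04}. Being a Thurston-relation planar algebra, once its circle parameter exceeds $2$ it is likewise constrained by Lemma~\ref{thm1} and Theorem~\ref{thm2} (which also cover the $\dim=23$ case) to have $\omega_H=a_H=a'_H=1$. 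Its two defining invariants are then
\[
\delta_H=\frac{r-r^{-1}}{q-q^{-1}},\qquad
\gamma_H=\frac{(rq-r^{-1}q^{-1})(rq^{-2}-r^{-1}q^2)}{(rq^2-r^{-1}q^{-2})(rq^{-1}-r^{-1}q)},
\]
the latter being the quotient of the two traces of Lemma~\ref{lemma:idem}.

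The main step is then to solve the system $\delta_H(q,r)=\delta$, $\gamma_H(q,r)=\gamma$ for $(q,r)$, given the target invariants of $\mathscr{P}_\bullet$. I would first use the $\delta$-equation, which after clearing denominators reads $r^2-\delta(q-q^{-1})\,r-1=0$, to express $r$ as a function of $q$ (and $\delta$) up to the two sign choices that, by Proposition~\ref{lemsym}, yield isomorphic planar algebras. Substituting this into the $\gamma$-equation collapses the problem to a single equation in $q$ alone, which I would solve in the admissible range. The existence of a solution realizing the prescribed $(\delta,\gamma)$ with $\delta>2$ is precisely what makes the matching work, so I expect this inversion to be the main obstacle: controlling the single-variable equation, tracking the $q\leftrightarrow -q^{-1}$ and $r\leftrightarrow r^{-1}$ ambiguities of Proposition~\ref{lemsym}, and verifying that the solution lands in the non-degenerate locus where $\operatorname{tr}(Q)\neq0$ and $\dim\mathscr{P}_{4,\pm}^H=24$.

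Finally, with $(q,r)$ chosen so that $(\delta_H,\gamma_H)=(\delta,\gamma)$, both planar algebras carry identical parameters $(\delta,\gamma,1,1,1)$, so Theorem~\ref{cor1} provides a surjective planar algebra homomorphism $\mathscr{P}_\bullet^H(q,r)\to\mathscr{P}_\bullet$ sending generator to generator. Since $\mathscr{P}_\bullet$ is a subfactor planar algebra, and hence non-degenerate, this homomorphism kills exactly the trace-radical of $\mathscr{P}_\bullet^H(q,r)$ and therefore descends to an isomorphism from the semisimple quotient of $\mathscr{P}_\bullet^H(q,r)$ onto $\mathscr{P}_\bullet$, which is the desired conclusion.
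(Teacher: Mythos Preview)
Your strategy is the same as the paper's: reduce via Theorem~\ref{cor1}, Lemma~\ref{thm1}, and Theorem~\ref{thm2} to the parameter pair $(\delta,\gamma)$, then exhibit $(q,r)$ so that $\mathscr{P}_\bullet^H(q,r)$ realises that pair, using the trace formulas of Lemma~\ref{lemma:idem}. The paper does exactly this, and in fact carries out the inversion you flag as the main obstacle: from the two equations it derives the closed formula
\[
q^2+q^{-2}=\frac{2(\delta^2-2)}{\delta^2-\bigl(\tfrac{\delta^2-2}{\delta}\tfrac{\gamma-1}{\gamma+1}\bigr)^2}-2,
\]
after which $r$ is recovered from $r-r^{-1}=\delta(q-q^{-1})$ and $r+r^{-1}=\pm\tfrac{\delta^2-2}{\delta}\tfrac{\gamma-1}{\gamma+1}(q+q^{-1})$; a short check with the sums $A\pm B$ of the traces in Lemma~\ref{lemma:idem} confirms the match.

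One point your outline does not address, and which the paper isolates as a separate case, is the degeneration $q=\pm 1$. Your setup writes $\delta_H=(r-r^{-1})/(q-q^{-1})$ and then substitutes $r=r(q)$ into the $\gamma$-equation; this breaks down precisely when the displayed formula above returns $q^2+q^{-2}=2$, which happens exactly when $\gamma=\dfrac{(\delta+2)(\delta-1)}{(\delta-2)(\delta+1)}$. In that regime $(q,r)=(1,1)$ and the circle parameter $\delta$ is an independent datum (cf.\ the remark after \eqref{circpara}), so the inversion cannot proceed through your quadratic in $r$. The paper handles this by checking directly that $\mathscr{P}_\bullet^H(1,1)$ with circle parameter $\delta$ has $\operatorname{tr}(P)/\operatorname{tr}(Q)$ equal to this specific $\gamma$. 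You should insert this case distinction before reducing to a single equation in $q$.

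Your final paragraph, passing to the semisimple quotient via the trace-radical, is a welcome clarification of a step the paper leaves implicit in its appeal to Theorem~\ref{cor1}.
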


\begin{proof}
	Suppose $\mathscr{P}_\bullet$ is a singly generated Thurston-relation planar algebra with parameter $(\delta>2,\gamma)$. We find $(q,r)$ such that $\mathscr{P}_\bullet^H(q,r)$ has the same parameter $(\delta,\gamma)$.
	
	Case 1: $\gamma=\displaystyle\frac{(\delta+2)(\delta-1)}{(\delta-2)(\delta+1)}$.
	
	In this case, we show that that $\mathscr{P}_\bullet^H(1,1)$ with a circle parameter $\delta$ is a solution. First note that this planar algebra has the desired $\delta$. Therefore, we only need to show that the ratio of two traces of the two minimal idempotents in $\mathscr{P}_3^H(1,1)$ equals to $\gamma$.
	By Lemma \ref{lemma:idem}, we know that $\mathscr{P}_3^H(1,1)$ with circle parameter $\delta$ has two minimal idempotents $P,Q$ with
	\begin{align}
			tr(P)&=\frac{\delta(\delta+2)(\delta-1)}{2}\\
			tr(Q)&=\frac{\delta(\delta-2)(\delta+1)}{2}
	\end{align}
	The ratio equals to $\displaystyle\frac{(\delta+2)(\delta-1)}{(\delta-2)(\delta+1)}$. Therefore, $\mathscr{P}_\bullet^H(1,1)$ with circle parameter $\delta$ gives a solution.
	
	Case 2: $\gamma\neq\displaystyle\frac{(\delta+2)(\delta-1)}{(\delta-2)(\delta+1)}$.
	Let $(q,r)$ be the solution of
		\begin{align} q^2+q^{-2}&=2(\delta^2-2)/(\delta^2-(\frac{\delta^2-2}{\delta}\frac{\gamma-1}{\gamma+1})^2)-2\label{Equ:qsolutions}\\
		r-r^{-1}=\delta(q-q^{-1})	\label{equ:circparameter}
		\end{align}
		(Note that the assumption  $\gamma\neq\displaystyle\frac{(\delta+2)(\delta-1)}{(\delta-2)(\delta+1)}$implies that $q\neq\pm1$.)
	From Equation \eqref{Equ:qsolutions}, we see that

	\begin{align}
	\delta^2(q^2+q^{-2}-2)+2&=(\frac{\delta^2-2}{\delta}\frac{\gamma-1}{\gamma+1}(q+q^{-1}))^2-2\\
	\delta^2(q-q^{-1})^2+4&=(\frac{\delta^2-2}{\delta}\frac{\gamma-1}{\gamma+1}(q+q^{-1}))^2\\
	(r-r^{-1})^2+4&=(\frac{\delta^2-2}{\delta}\frac{\gamma-1}{\gamma+1}(q+q^{-1}))^2 \label{equ:r}
	\end{align}
	From Equation \eqref{equ:r}, we obtain
		\begin{equation}\label{equ:r'}
		r+r^{-1}=\pm\frac{\delta^2-2}{\delta}\frac{\gamma-1}{\gamma+1}(q+q^{-1})
		\end{equation}
	Then we have the following equations:
	\begin{align}
		r^2-r^{-2}&=\pm(\delta^2-2)\frac{\gamma-1}{\gamma+1}(q^2-q^{-2})\label{equ:r2-}\\
		r^2+r^{-2}&=\delta^2(q^2+q^{-2}-2)+2\label{equ:r2+}
	\end{align}
	Recall that $q\neq\pm1$, we let $A,B$ to be defined by the formulas in Lemma \ref{lemma:idem}
	\begin{align}
	A&=\frac{(r-r^{-1})(rq-r^{-1}q^{-1})(rq^{-2}-r^{-1}q^2)}{(q+q^{-1})(q-q^{-1})^3}\\
	B&=\frac{(r-r^{-1})(rq^2-r^{-1}q^{-2})(rq^{-1}-r^{-1}q)}{(q+q^{-1})(q-q^{-1})^3}
	\end{align}
	Then we have
	\begin{align}
		A+B&=\frac{(r-r^{-1})((r^2+r^{-2})(q+q^{-1})+2(q^3+q^{-3}))}{(q+q^{-1})(q-q^{-1})^3}\label{equ:A+b}\\
		A-B&=\frac{(r-r^{-1})(r^2-r^{-2})(q-q^{-1})}{(q+q^{-1})(q-q^{-1})^3}\label{equ:A-b}
	\end{align}
	Combined with the Equation \eqref{equ:circparameter},\eqref{equ:r2-} and Equation \eqref{equ:r2+}, we see that either
	\begin{align}
		A&=(\delta^3-2\delta)\frac{1}{1+\gamma}  \\
		B&=(\delta^3-2\delta)\frac{\gamma}{1+\gamma}
	\end{align}
	or
	\begin{align}
		A&=(\delta^3-2\delta)\frac{\gamma}{1+\gamma}  \\
		B&=(\delta^3-2\delta)\frac{1}{1+\gamma}
	\end{align}
	In both cases, the singly generated Thurston-relation planar algebra with $(\delta,\gamma)$ and $\mathscr{P}_\bullet^H(q,r)$ have the same skein theory. Therefore, the planar algebras are isomorphic.

\end{proof}
\begin{remark}
	There are 8 solutions $(q,r)$ in the above theorem. The 8 corresponding planar algebras are isomorphic in Proposition \ref{lemsym}.
\end{remark}

\section{Reduced case}\label{Sec:reduced}
In this section, we will classify subfactor planar algebras $\mathscr{P}_{\bullet}$ generated by a 3-box with Thurston-relation for the reduced case, namely $\Pl_{4,\pm}\leq 23$.

\subsection{The case for at most 22 dimensional 4-box space}

\begin{proposition}\label{smallthm} If $\mathscr{P}_\bullet$ is $2$-supertransitive, then $\delta\leq 2$ if and only if $\dim{\mathscr{P}_{4,\pm}}\le 22$. In this case, $\mathscr{P}_{\bullet}$ is either the $E_{6}$ or $E^{(1)}_{6}$ subfactor planar algebra.
\end{proposition}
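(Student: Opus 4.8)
The plan is to split the biconditional into its two implications, using the classification of subfactors of index at most $4$ for one direction and a positivity/Gram-matrix argument for the other, and then to read off the isomorphism class at the end. Throughout I use that $\mathscr{P}_\bullet$ is generated by a non-trivial $3$-box $S$, so its supertransitivity is \emph{exactly} $2$: the $3$-box space already departs from Temperley-Lieb, $\dim\mathscr{P}_{3,\pm}=6>5$, so $\mathscr{P}_\bullet$ cannot be $3$-supertransitive (that would force $\mathscr{P}_{3,\pm}$ to coincide with $\mathrm{TL}_3$ and $S$ to be trivial). Recall also that $\delta\le 2$ is equivalent to the index $\delta^2\le 4$.

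For the direction $\delta\le 2\Rightarrow\dim\mathscr{P}_{4,\pm}\le 22$, I would invoke the classification of subfactors of index at most $4$ \cite{Ocn88,Pop94}: the principal graph is one of the $A$-$D$-$E$ or affine $A$-$D$-$E$ graphs. The exactly-$2$-supertransitivity constraint forces the graph to begin with a segment of type $A$ of length $2$ and then branch, which eliminates every candidate except $E_6$ (index $2+\sqrt3$) and the affine $E^{(1)}_{6}$ (index $4$). Indeed, the $A_n$ and affine $A$ graphs are Temperley-Lieb and admit no $3$-box generator; the $D$ graphs branch too early; and $E_7,E_8,E^{(1)}_{7},E^{(1)}_{8}$ are at least $3$-supertransitive. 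Both $E_6$ and $E^{(1)}_{6}$ are realized by a $3$-box satisfying Thurston's relation, and counting standard forms yields $\dim\mathscr{P}_{4,\pm}=21$ and $22$ respectively; in particular $\dim\mathscr{P}_{4,\pm}\le 22$, and this simultaneously establishes the classification statement.

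For the converse I would argue the contrapositive $\delta>2\Rightarrow\dim\mathscr{P}_{4,\pm}\ge 23$, working with the standard-form spanning set of Corollary \ref{Cor:24-basis}: the $14$ Temperley-Lieb diagrams, the $8$ annular consequences of $S$, and the $2$ two-generator diagrams. For $\delta>2$ the Temperley-Lieb subspace is genuinely $14$-dimensional, and the $8$ annular consequences are independent modulo $\mathrm{TL}$: their Gram matrix, whose entries are Chebyshev-type polynomials in $\delta$ fixed by the uncappability and rotation relations \eqref{Equ:S1}--\eqref{Equ:S2}, is positive definite precisely for $\delta>2$. This already gives $\dim\mathscr{P}_{4,\pm}\ge 22$. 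To upgrade to $23$, I would take the two-generator diagram coming from $S\cdot S$ and compute the Schur complement of its norm against the span of the preceding $22$ standard forms; using the quadratic relation \eqref{Equ:S3} together with positivity of the subfactor inner product, this Schur complement is strictly positive for $\delta>2$, so the $S\cdot S$ diagram lies outside that span and $\dim\mathscr{P}_{4,\pm}\ge 23$. Hence $\dim\mathscr{P}_{4,\pm}\le 22$ forces $\delta\le 2$, and the isomorphism class is the one already pinned down.

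The main obstacle is the explicit Gram-matrix and Schur-complement computation for the annular and two-generator diagrams, and in particular locating the degeneration thresholds exactly. The delicate point is that at $\delta=2$ the annular consequences remain of full rank $8$ while \emph{both} two-generator diagrams collapse, producing $E^{(1)}_{6}$ with $\dim\mathscr{P}_{4,\pm}=22$, whereas for $\delta<2$ a further annular relation appears and the dimension drops to $21$, producing $E_6$. This stratification is exactly consistent with the positivity thresholds separating the finite from the affine $A$-$D$-$E$ data at index $4$, and it is what makes the biconditional sharp at $\delta=2$ rather than at some nearby value.
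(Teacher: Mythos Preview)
Your forward direction ($\delta\le 2\Rightarrow\dim\le 22$ and the identification with $E_6$ or $E_6^{(1)}$) matches the paper exactly: both invoke the index-$\le 4$ classification and eliminate everything except $E_6$, $E_6^{(1)}$ by the supertransitivity constraint.

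For the converse the routes diverge. The paper disposes of $\dim\le 22\Rightarrow\delta\le 2$ in a single sentence by citing Ocneanu's triple point obstruction. You instead propose to build the bound $\dim\ge 23$ for $\delta>2$ by hand: first get $22$ from the independence of the $14$ Temperley--Lieb diagrams and the $8$ annular consequences, then push to $23$ via a Schur-complement/positivity argument for a two-generator diagram. The first step is solid and is in fact exactly what the paper invokes later (citing Jones \cite{JonAnn}) when treating the $\dim=23$ case. The second step is where your proposal is genuinely different---and also where it is incomplete, as you yourself flag: you assert that the Schur complement is strictly positive for $\delta>2$ but do not compute it, and the quadratic relation \eqref{Equ:S3} alone does not make this obvious. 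Ocneanu's triple point obstruction is precisely the packaged form of the positivity computation you are gesturing at; citing it, as the paper does, replaces your ``main obstacle'' with a one-line reference. So your approach is correct in spirit and would eventually reproduce the obstruction, but it is longer and leaves the crucial inequality unverified, whereas the paper's proof is two sentences.
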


Thus the main Theorem \ref{Thm:main} holds for the reduced case $\dim{\mathscr{P}_{4,\pm}}\le 22$.

\begin{proof}
If $\delta\le 2$, then $\mathscr{P}_{\bullet}$ is either the $E_{6}$ or $E^{(1)}_{6}$ by the classification of subfactors up to index 4 \cite{Pop94}(Popa). Moreover, $\dim{\mathscr{P}_{4,\pm}}\le 22$.

If $\dim{\mathscr{P}_{4,\pm}}\le 22$, then $\delta\leq 2$ by Ocneanu's triple point obstruction \cite{Ocn88}(Ocn88).
\end{proof}

\subsection{The case for 23 dimensional 4-box space}
In this section, we classify subfactor planar algebras $\mathscr{P}_{\bullet}$ generated by a 3-box $S$ with Thurston-relation,
such that $\dim (\mathscr{P}_{4,\pm})=23$. In this case, we have $\delta>2$ and $\mathscr{P}_{3,+}=6$.
By the result of Jones \cite{JonAnn}, the 14 Temperley-Lieb diagrams and the 8 diagrams in the annular consequence are linearly independent.
Then one of the diagram with two generators $\gra{I}$, $\gra{H}$, $\gra{SI}$ and $\gra{SH}$ is linearly independent with these 22 diagrams. Otherwise $\mathscr{P}_{4,\pm}=22$.

Up to rotation and the duality of the shading, we can assume that $\gra{I}$ and the other 22 diagrams form a basis.
Similarly to Lemma \ref{Lem:Generic-cutdown-4-basis}, the basis for the $f_2$-cutdown of $\mathscr{P}_{4,+}$ is given by
$$B'=\left\{\graa{ISS},\graa{HTLS},\graa{HSTL},\graa{ISTL},\graa{ITLS},\graa{TLID},\graa{TLE},\graa{TLC}\right\}.$$

Note that all results in \S \ref{Sec:generic} work for the case $\mathscr{P}_{4,\pm}=23$, except Theorem \ref{thm1}. There we used the fact that $\graa{BOX4TLSTLS}$ is linearly independent with $B'$, which is no longer true for the reduced case.
Now we give a different proof of $\omega=1$ for the reduced case.
Consequently the main Theorem \ref{Thm:main} holds for the reduced case $\dim{\mathscr{P}_{4,\pm}}\le 23$.

\begin{theorem}\label{thm:omega23}
If $\mathscr{P}_\bullet$ is a singly genrated Thurston-relation subfactor planar algebra with parameters $(\delta,\gamma,\omega,a,a')$, then $\omega=1$.
\end{theorem}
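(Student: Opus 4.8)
The plan is to argue by contradiction, assuming $\omega\neq1$; since $\omega^{3}=1$ this means $\omega$ is a primitive cube root of unity, so $\omega+\omega^{-1}=-1$, and the goal is to contradict $\delta>2$. The one ingredient of the generic proof (Theorem \ref{thm1}) that survives the loss of independence of $\graa{BOX4TLSTLS}$ is the rotational constraint behind relation (ix) of Lemma \ref{skein}. Indeed, the triangle $\graa{BOX3SSS}$ is carried to itself by the $\mathbb{Z}/3$ rotation of its three legs, whereas the vertex $\graa{SV}$ acquires a factor $\omega$ under the corresponding rotation (the eigenvalue equation \eqref{Equ:S2}); writing $\graa{BOX3SSS}=U\,\graa{SV}+W\,\graa{TLTRIV}$ and applying the rotation gives $U(1-\omega)=0$, so $U=0$ when $\omega\neq1$. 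This vanishing follows from Relations \eqref{Equ:S1}--\eqref{Equ:S3} alone and makes no reference to $\dim\mathscr{P}_{4,\pm}$; it is exactly what replaces the independence argument used in the generic case.

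First I would compute the same component $U$ a second, independent way and set it to zero. Opening one edge of the triangle with the unshaded $2\leftrightarrow2$ move and reducing the resulting two-generator diagrams by the skein relations (i)--(viii) of Lemma \ref{skein} together with the coefficients of Lemma \ref{Thurston} (all valid for $\dim\mathscr{P}_{4,\pm}=23$) expresses $U$ as an explicit rational function of $\delta$ and $\gamma$ after substituting $\omega+\omega^{-1}=-1$. Setting $U=0$ reproduces, for $a=1$, the relation $\frac{\gamma\delta(\delta^{2}-3)}{\delta^{2}-1}=\frac{2(\gamma-1)^{2}\delta}{\delta^{2}-3}$, with an analogous relation in the case $a=-1$. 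In the generic case this was combined with the extra input $\gamma=1$ coming from independence; on its own it is satisfied by some $\delta>2$ with positive $\gamma$, so a single such equation cannot conclude.

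The hard part, and the one place where a genuinely new idea is needed, is to supply a second relation in $(\delta,\gamma)$ without the independence of $\graa{BOX4TLSTLS}$. The $\mathbb{Z}/3$ symmetry of the triangle makes the three unshaded openings of $\graa{BOX3SSS}$ agree, so no new equation comes from repeating this opening on another edge; the independent information must come from the opposite chirality or shading. I would therefore run the same triangle computation using the shaded $2\leftrightarrow2$ move and its sign $a'$ (this is precisely the role of the opposite-shading H-I relation stressed in the footnote), obtaining a second rational relation between $\delta$ and $\gamma$. The bookkeeping obstruction throughout is that $\graa{BOX4TLSTLS}$ is no longer a basis vector, so it must everywhere be replaced by its expansion $\graa{BOX4TLSTLS}=\lambda\,\graa{ISS}+(\text{Temperley-Lieb and annular terms})$ over $B'$, with the coefficient $\lambda$ of $\graa{ISS}$ (as in the proof of Theorem \ref{thm2}) carried through. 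Finally I would solve the resulting system of two relations and verify that every solution forces $\delta^{2}\le4$, contradicting $\delta>2$. This rules out $\omega\neq1$, and once $\omega=1$ is known the proof of Theorem \ref{thm2} applies verbatim to give $a=a'=1$ in the reduced case as well.
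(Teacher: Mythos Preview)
Your first equation is exactly right and coincides with the paper's equation (EE1) (resp.\ its $a=-1$ analogue): the $\mathbb{Z}/3$ rotational symmetry of the triangle forces the $\graa{SV}$-coefficient to vanish when $\omega\neq1$, and evaluating that coefficient via the unshaded $2\leftrightarrow2$ move and Lemma \ref{skein} gives a relation between $\delta$ and $\gamma$ independent of $\dim\mathscr{P}_{4,\pm}$. So far so good.

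The gap is in how you obtain the \emph{second} equation. The paper does not get it from skein theory at all: it invokes Penneys's triple point obstruction, a structural constraint on the principal graph, which (after a case split on whether the vertex $P'$ past the branch point is self-dual) yields either $\delta^4-4\delta^2+1=0$ directly, or the quadratic $[3]\gamma^2\pm\gamma-[5]=0$. The latter is then combined with the triangle equation, and the resulting one-variable system is checked (numerically) to have no root with $\delta>2$. This second input is external to the $2\leftrightarrow2$ moves and to the $f_2$-cutdown calculus.

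Your proposal to extract a second equation from the \emph{shaded} $2\leftrightarrow2$ move on the same triangle is only a plan, and it is not clear it can be completed. Concretely: the shaded relation \eqref{Equ:H-I2} lives in $\mathscr{P}_{4,-}$, while all of Lemma \ref{skein}, Lemma \ref{Thurston}, and the basis $B'$ are set up in the $f_2$-cutdown of $\mathscr{P}_{4,+}$; you would first have to develop the parallel cutdown calculus on the other shading and relate its coefficients $(b',c',d')$ to $(\delta,\gamma,\omega)$. You also leave the unknown $\lambda$ in the expansion of $\graa{BOX4TLSTLS}$ as a free parameter to be ``carried through,'' but never say how it is determined or eliminated. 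Finally, even granting an independent second relation, you give no argument (or computation) that the resulting system has no solution with $\delta>2$; the paper itself resorts to a Mathematica check at the analogous step. As written, the proposal identifies the right first equation but does not supply the missing second constraint; the paper's use of the triple point obstruction is precisely the ``genuinely new idea'' you flag as needed.
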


\begin{proof}
One can deduce that the principal graph starts with the graph
$$\grb{PGR1}$$
where P and Q correspond to 2 minimal projections in $\mathscr{P}_{3,+}/\mathscr{I}_{3,+}$ described above.
	Now, we can apply Penney's triple point obstruction [Pen13]:
	\begin{equation}
	(\gamma-1)-\frac{\sigma+\sigma^{-1}}{[3]}=-(1+\gamma)\frac{[4]}{[3]}\left(Coef_{\in\cap_{n+1}\bar{P^{'}}}(S)\right)
	\end{equation}
	Where $\sigma^2=\omega$ and $[n]=\frac{q^{n}-q^{-n}}{q-q^{-1}}$ is the $n^{th}$ quantum number (where $q$ is defined so that $\delta=[2]=q+q^{-1}$ and $q>1$), $P^{\prime}$ is the projection labelled in our picture, and $\overline{P}^{\prime}$ is the dual projection.

Under the assumption $\omega=e^{i\frac{1}{3}\pi}$ or $e^{i\frac{2}{3}i\pi}$, $\sigma+\sigma^{-1}=\pm1$. This implies that
    \begin{equation}
	(\gamma-1)\pm\frac{1}{[3]}=-(1+\gamma)\frac{[4]}{[3]}\left(Coef_{\in\cap_{n+1}\bar{P^{'}}}(S)\right)
	\end{equation}
We now consider $2$ subcases:
	Suppose $P^{'}$ is self-dual, i.e $\gra{PCAP}=P$. In this case, we have
	\begin{equation}
	Coef_{\in\cap_{n+1}\bar{P'}}(S)=\frac{tr(P')}{tr(P)}\frac{1}{\gamma+1}=\frac{[2]tr(P)-[3]}{tr(P)}\frac{1}{\gamma+1}=([2]-\frac{[3]}{[4]}(\gamma+1))\frac{1}{\gamma+1}
	\end{equation}

	Therefore,
	\begin{equation}
	\gamma-1\pm\frac{1}{[3]}=-\frac{[4][2]}{[3]}+1+\gamma,
	\end{equation}
	
which implies

    $$\delta^4-4\delta^2+1=0,$$

hence $\delta^2=2+\sqrt{3}<4$, contradicting our assumption that $\delta>2$.
	
	Now suppose that $P'$ is not self-dual, i.e $\gra{PCAP}=Q$. In this case, we have
	\begin{equation}
	Coef_{\in\cap_{n+1}\bar{P'}}(S)=\frac{tr(P')}{tr(Q)}\frac{1}{\gamma+1}=-\frac{[2]tr(P)-[3]}{tr(Q)}\frac{1}{\gamma+1}=-([2]\frac{1}{r}-\frac{[3]}{[4]}(\frac{1}{\gamma}+1))\frac{1}{\gamma+1}
	\end{equation}
	
	Therefore,
	$$\gamma-1\pm\frac{1}{[3]}=-\frac{[4]}{[3]}([2]\frac{1}{\gamma}-\frac{[3]}{[4]}(1+\frac{1}{\gamma})),$$
	
	hence  $[3]\gamma^2\pm\gamma-[5]=0$.
	
    Now suppose $a=1$, then by considering evaluating $\graa{BOX3SSS}$, we have the following equations,
	
	\begin{align}	
		\frac{\gamma\delta}{\delta^2-1}(\delta^2-3)&=2(\gamma-1)^2\frac{\delta}{\delta^2-3}\label{EE1}\;,\\
		[3]\gamma^2\pm\gamma-[5]&=0\;.	\label{EE2}
	\end{align}
	
	One can solove Equation \eqref{EE2} for $\gamma$ in terms of $\delta$ and plug it in back to \eqref{EE1}. We use Mathematica to solve numerical solutions for $\delta$. They are far below 2 hence there is no solution when $\delta>2$.

    If $a=-1$, we obtain two equations following simialr arguments,
	\begin{align}
	-\gamma\frac{\delta^3-2\delta}{\delta^2-1}+b(2(\gamma-1))+c&=0\;,\\
	[3]\gamma^2\pm\gamma-[5]&=0\;,
	\end{align}
	where $b,c$ are given in Equation \eqref{Solution:a=-1}.
	
	One can show there is no solution with $\delta>2$ in a similar way.
	
	 We conclude that $\omega=1$ for the dimension 23 case.
\end{proof}

\begin{theorem}\label{Thm:main2}
	Suppose $\mathscr{P}_\bullet$ is a singly generated Thurston-relation planar algebra and $\dim{\mathscr{P}_{4,\pm}}=23$, then it is isomorphic to the semisimple quotient of $\mathscr{P}_\bullet^H(q,r)$ for some $(q,r)$.
\end{theorem}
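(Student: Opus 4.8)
The plan is to follow the identification strategy of Theorem \ref{Thm:main1} essentially verbatim, the only real difference being the route used to pin down the rotation eigenvalue. By Theorem \ref{cor1}, the planar algebra $\mathscr{P}_\bullet$ is determined by the five parameters $(\delta,\gamma,\omega,a,a')$, and this structural statement --- together with Theorem \ref{thm2}, whose proof was explicitly written so as not to require $\dim\mathscr{P}_{4,\pm}=24$ --- already holds in the $23$-dimensional case. My first move is therefore to invoke Theorem \ref{thm:omega23} to obtain $\omega=1$ in place of Lemma \ref{thm1}; the latter is unavailable here precisely because $\graa{BOX4TLSTLS}$ is no longer linearly independent of $B'$. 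With $\omega=1$ fixed, Theorem \ref{thm2} then forces $a=a'=1$, so that $\mathscr{P}_\bullet$ is determined entirely by the pair $(\delta,\gamma)$ with $\delta>2$.

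It then remains to realize every such $(\delta,\gamma)$ by a HOMFLY--PT planar algebra, and here I would reproduce the argument of Theorem \ref{Thm:main1} without change. One splits into the case $\gamma=\tfrac{(\delta+2)(\delta-1)}{(\delta-2)(\delta+1)}$, realized by $\mathscr{P}_\bullet^H(1,1)$ with circle parameter $\delta$, and the complementary case, where one solves Equations \eqref{Equ:qsolutions} and \eqref{equ:circparameter} for $(q,r)$. Using the trace formulas of Lemma \ref{lemma:idem} for the two minimal idempotents of $\mathscr{P}^H(q,r)_{3,+}$, one verifies that their trace ratio is exactly $\gamma$, so $\mathscr{P}_\bullet^H(q,r)$ carries the same $(\delta,\gamma)$. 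Since both planar algebras now share all five Thurston parameters, Theorem \ref{cor1} produces a planar algebra isomorphism between $\mathscr{P}_\bullet$ and the semisimple quotient of $\mathscr{P}_\bullet^H(q,r)$.

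The one point requiring care --- and the place where the reduced case genuinely differs from the generic one --- is the internal consistency of dimensions. In the generic proof the identification is immediate because $\dim\mathscr{P}_{4,\pm}=24$ matches the generic HOMFLY--PT dimension; here I must check that the matched $(q,r)$ lands in the locus where the semisimple quotient of $\mathscr{P}_\bullet^H(q,r)$ also has a $23$-dimensional $4$-box space. This is not an additional hypothesis but a consequence of the isomorphism: the single linear relation collapsing the generic $24$-dimensional basis to $23$ dimensions --- namely the expression of $\graa{BOX4TLSTLS}$ in terms of $B'$ --- is dictated entirely by the Thurston skein relations, whose coefficients depend only on $(\delta,\gamma,\omega,a,a')$. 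Since those parameters agree on both sides, the same dependence holds, and the isomorphism of Theorem \ref{cor1} automatically transports the $23$-dimensional structure across. I therefore expect the genuine difficulty to lie not in this theorem at all, but to have been absorbed entirely into Theorem \ref{thm:omega23}, where Penney's triple-point obstruction replaces the independence-based derivation of $\omega=1$ that degenerates once $\graa{BOX4TLSTLS}$ becomes dependent.
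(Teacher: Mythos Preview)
Your proposal is correct and follows essentially the same approach as the paper: invoke Theorem \ref{thm:omega23} for $\omega=1$, then Theorem \ref{thm2} for $a=a'=1$, then repeat the proof of Theorem \ref{Thm:main1} to identify $(\delta,\gamma)$ with some $\mathscr{P}_\bullet^H(q,r)$. The paper's proof is in fact the single sentence ``It follows from Theorem \ref{thm:omega23}, \ref{thm2} and the proof of Theorem \ref{Thm:main1},'' so your added discussion of dimension consistency is more than the authors themselves provide.
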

\begin{proof}
It follows from Theorem \ref{thm:omega23}, \ref{thm2} and the proof of Theorem \ref{Thm:main1}.
\end{proof}

\section{Positivity}\label{Section:Pos}

In this section, we will determine the positivity of $\mathscr{P}_\bullet^H(q,r)$.

Consider the map $\phi_n:H_n(q,r)\rightarrow\mathscr{P}_n^H(q,r)$, for $x\in H_n(q,r)$ defined $\phi_n(x)$ as follows:
$$\phi_n\left(\grb{HECKEA}\right)=\grb{HOMFLYA}.$$

\begin{proposition}\label{embed}
	The map	$\phi_n$ is an algebra homomorphism from $H_n(q,r)$ to $\mathscr{P}_{2n}^H(q,r)$ preserving the normalized Markov trace. Furthermore, $\phi_{n+1}\vert_{H_{n}(q,r)}=\phi_n$.
\end{proposition}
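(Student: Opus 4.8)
The plan is to verify the three assertions separately, treating $\phi_n$ as a map of diagrams and exploiting that the HOMFLY-PT planar algebra satisfies, by construction, exactly the relations defining the Hecke algebra.

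First I would establish that $\phi_n$ is a well-defined, unital algebra homomorphism. Recall that $H_n(q,r)$ is generated by $\sigma_1,\dots,\sigma_{n-1}$ subject to the braid relations and the Hecke quadratic relation. Since $\phi_n$ is defined on diagrams, multiplicativity $\phi_n(xy)=\phi_n(x)\phi_n(y)$ is essentially built in: the product in $H_n$ is vertical stacking of braids, and under the depicted embedding this is carried to the planar-algebra multiplication of the images, while the identity braid maps to the identity diagram, giving $\phi_n(1)=1$. Well-definedness then reduces to checking that the image of each defining relation of $H_n$ holds in $\mathscr{P}^H_{2n}(q,r)$. The braid relations are precisely Reidemeister moves II and III, and the quadratic relation is the Hecke relation; all of these hold in $\mathscr{P}_\bullet^H(q,r)$ by definition of the HOMFLY-PT skein. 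Hence $\phi_n$ descends to a homomorphism.

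The heart of the proposition, and the step I expect to be the main obstacle, is that $\phi_n$ intertwines the normalized Markov traces. I would argue by uniqueness: the normalized Markov trace on $H_n(q,r)$ is the unique normalized trace satisfying the Markov property $tr(x\sigma_{n})=\tau\,tr(x)$ for $x\in H_{n}$, with the appropriate parameter $\tau=\tau(q,r)$. Pulling back the planar-algebra trace (the partition function of the closure) along $\phi_n$ yields a functional $x\mapsto Tr\bigl(\widehat{\phi_n(x)}\bigr)$, suitably normalized. This functional is tracial, since closing up $\phi_n(xy)$ and $\phi_n(yx)$ give isotopic diagrams and the partition function is an isotopy invariant; it is normalized to send $1$ to $1$ after dividing by the value of the closure of the identity. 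The only genuine computation is the Markov property: closing the rightmost through-strand of $\phi_n(x\sigma_n)$ produces a Reidemeister-I curl, which by the framing relations \eqref{circpara} contributes exactly the scalar encoding $\tau$, so matching this factor with the Hecke-algebra Markov parameter is where the normalization must be pinned down carefully. Granting this, uniqueness of the Markov trace forces the pulled-back functional to equal $tr_{H_n}$. As a cross-check, one can instead compare both traces on the minimal idempotents $m_\lambda$ using the trace formula \eqref{equ:traceformula} of Theorem \ref{traceformula}.

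Finally, the compatibility $\phi_{n+1}\vert_{H_n}=\phi_n$ follows directly from the local, diagrammatic nature of the definition. The inclusion $H_n\hookrightarrow H_{n+1}$ adds one oriented through-string on the right; under $\phi$ this through-string is carried to the corresponding through-strands adjoined to the planar diagram, which is exactly the standard inclusion $\mathscr{P}^H_{2n}(q,r)\hookrightarrow\mathscr{P}^H_{2(n+1)}(q,r)$. Since $\phi$ acts identically on the part of the diagram coming from $H_n$, the two maps agree on $H_n$. Combining the three parts completes the proof.
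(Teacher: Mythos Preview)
Your proposal is correct and is essentially an unpacking of the paper's one-line proof, which simply says the statement follows from the HOMFLY-PT skein relations: your verification that the braid and Hecke relations map to Reidemeister II, III and the Hecke skein relation, and that the Markov property comes from Reidemeister I together with the circle parameter, is exactly what that sentence means. The only mild difference is that for the trace you invoke the uniqueness characterization of the Markov trace rather than a bare diagrammatic identification, but this is the same content phrased more conceptually.
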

\begin{proof}
	This follows from the HOMFLY-PT skein relations.
\end{proof}
\begin{notation}\label{phi}
	We define $\phi$ on $H_\bullet(q,r)$ as $\displaystyle \lim_{n\to\infty}\phi_n$.
\end{notation}

Recall that when $r=q^N$ for some $N\in\mathbb{N}$ and $q=e^{\frac{i\pi}{N+l}}$ for some $l\in\mathbb{N}$ or $q\geq1$, $\mathscr{P}_\bullet^H(q,r)$ admits an involution $*$ such that the Markov trace is semipositive-definite. We show that they are the only values of $(q,r)$ and involution such that positivity holds.

\begin{theorem}\label{Thm:positivity}
	The planar algebra $\mathscr{P}_\bullet^H(q,r)$ has positivity if and only if $r=q^N$ for some $N\in\mathbb{N}$, and $q=e^{\frac{i\pi}{N+l}}$ for some $l\in\mathbb{N}$ or $q\geq1$.
\end{theorem}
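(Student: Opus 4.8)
The plan is to derive the necessary constraints on $(q,r)$ from positivity of the normalized Markov trace on the Hecke algebra, and to verify sufficiency from the known $SU(N)$ constructions. By Proposition~\ref{embed} the maps $\phi_n$ embed $H_\bullet(q,r)$ into $\mathscr{P}_\bullet^H(q,r)$ compatibly with the normalized Markov trace, so any $*$-structure witnessing positivity of $\mathscr{P}_\bullet^H(q,r)$ restricts to a positive trace on $H_\bullet(q,r)$. Since $H_n(q,r)$ is semisimple with minimal idempotents $m_\lambda$ indexed by Young diagrams, positivity of its trace forces $Tr(m_\lambda)\ge 0$ for every $\lambda$, the trace-zero idempotents spanning the ideal that is quotiented out to form the subfactor planar algebra. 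In particular each $Tr(m_\lambda)$ must be real and non-negative: the single-cell diagram gives $\delta=\tfrac{r-r^{-1}}{q-q^{-1}}\ge 0$, and reality of all traces forces $q$ and $r$ each to be real or to lie on the unit circle. Using the isomorphisms of Proposition~\ref{lemsym} we then normalize to the two cases $q>1$ real with $r>1$, or $|q|=|r|=1$.

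The next step is to force $r=q^{N}$ for an integer $N$ by testing the single-column diagrams $\lambda=1^{k}$. For such $\lambda$ the cells $(i,1)$ have content $1-i$ and hook length $k-i+1$, so Theorem~\ref{traceformula} gives
\[
Tr(m_{1^{k}})=\prod_{i=1}^{k}\frac{rq^{\,1-i}-r^{-1}q^{\,i-1}}{q^{\,k-i+1}-q^{-(k-i+1)}} .
\]
When $q>1$ the denominators are all positive, and writing $r=q^{s}$ with $s>0$ the $i$-th numerator factor is $q^{\,s+1-i}-q^{-(s+1-i)}$, which is positive for $i<s+1$, zero for $i=s+1$, and negative for $i>s+1$. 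If $s\notin\mathbb{N}$, choosing $k=\lfloor s\rfloor+2$ produces exactly one negative factor and no zero factor, whence $Tr(m_{1^{k}})<0$, a contradiction. Hence $s=N\in\mathbb{N}$ and $r=q^{N}$; the columns of height exceeding $N$ then acquire a vanishing factor (content $N+c=0$) and fall into the kernel. The analogous computation on the unit circle, with factors $\sin(\beta+c\theta)$ where $r=e^{i\beta}$ and $q=e^{i\theta}$, again forces $r=q^{N}$.

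With $r=q^{N}$ the trace formula becomes the quantum dimension $Tr(m_\lambda)=\prod_{(i,j)}[\,N+c(i,j)\,]\big/[\,h(i,j)\,]$, where $[k]=\tfrac{q^{k}-q^{-k}}{q-q^{-1}}$. For $q\ge 1$ every diagram with at most $N$ rows has all factors strictly positive, while diagrams with more than $N$ rows contain a cell of content $-N$ and hence vanish; thus positivity holds for all $q\ge 1$, giving one branch of the theorem. For $|q|=1$ write $q=e^{i\theta}$ with $0<\theta<\pi$, so that $[k]=\sin(k\theta)/\sin\theta$. Non-vanishing of the denominators on the surviving idempotents together with non-negativity of the products pins $\theta$ down: at $\theta=\pi/(N+l)$ with $l\in\mathbb{N}$ one has $[k]>0$ for $1\le k\le N+l-1$ and $[N+l]=0$, so exactly the diagrams lying strictly inside the $SU(N)_l$ alcove (at most $N$ rows, all hooks $<N+l$) survive, and they have positive quantum dimension; this recovers the stated values $q=e^{i\pi/(N+l)}$. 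Any other root of unity $\theta=p\pi/m$ with $p\ge 2$ makes $\sin(k\theta)$ change sign too early, and a suitable column $1^{k}$ then has an odd number of negative denominator factors against positive numerators, giving $Tr(m_{1^{k}})<0$ and excluding that value.

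For the converse, positivity at these values follows because the quantum dimensions just computed are non-negative, equivalently from the construction of the $SU(N)$ planar algebras by Xu \cite{Xu98S} and Wenzl's analysis of the Hecke-algebra trace \cite{Wen88}. I expect the main obstacle to be the unit-circle case: ruling out every root of unity except $q=e^{i\pi/(N+l)}$ requires exhibiting, for each bad $\theta$, an explicit Young diagram on which the alternating signs of the sines force a negative trace, while simultaneously keeping careful track of which idempotents genuinely survive in the semisimple quotient rather than lying in the kernel. It is precisely the coupling between the integer $N$ coming from $r=q^{N}$ and the order of $q$ that makes this sign bookkeeping delicate, whereas the real case $q\ge 1$ is comparatively straightforward once the single-column test has forced $r=q^{N}$.
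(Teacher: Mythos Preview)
Your approach is essentially the paper's: embed the Hecke algebra via Proposition~\ref{embed} and use Theorem~\ref{traceformula} on single-row and single-column Young diagrams to exhibit idempotents of negative trace. Two points are worth flagging.

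To force $q$ real or on the unit circle, the paper does not argue from ``reality of all traces'' as you do. Instead it observes that positivity makes the semisimple quotient a Thurston-relation subfactor planar algebra with real parameters $\delta>2$, $\gamma>0$, and then Equation~\eqref{Equ:qsolutions} exhibits $q+q^{-1}$ explicitly as a positive real number; after normalising by Proposition~\ref{lemsym} this gives $q\ge 1$ or $q=e^{i\theta}$ with $0\le\theta\le\pi/2$. Your reality assertion is plausible but is not justified as written, and it is not obvious that reality of each $Tr(m_\lambda)$ separately forces $q$ and $r$ individually to be real or unimodular.

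Your anxiety about the unit-circle case is misplaced, but your treatment there does have a genuine gap: you only discuss roots of unity $\theta=p\pi/m$, whereas $\theta$ is a priori an arbitrary real number. The paper handles every $\theta$ uniformly and briefly. Writing $r=e^{i\alpha}$: if $N\theta<\alpha<(N+1)\theta$ for some $N$ then $Tr(m_{[N+2]})<0$; once $\alpha=N\theta$, if $\tfrac{\pi}{N+l+1}<\theta<\tfrac{\pi}{N+l}$ for some $l$ then $Tr(m_{[1^{l+1}]})<0$. In each bad interval a single explicit Young diagram suffices, and there is no delicate bookkeeping about which idempotents survive in the quotient.
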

\begin{proof}	
	If $\mathscr{P}_\bullet^H(q,r)$ has positivity, then the idempotents $P$ and $Q$ in Lemma \ref{lemma:idem} are projections, therefore the generator $S$ is self-adjoint. Since $\mathscr{P}_\bullet^H(q,r)$ is generated by $S$, the involution $*$ is uniquely determined.

	In Theorem \ref{Thm:main1} and \ref{Thm:main2}, we show that a singly generated Thurston-relation planar algebra with parameters $(\delta,\gamma)$ and $\delta>2$ is isomorphic to $\mathscr{P}_\bullet^H(q,r)$ for some $(q,r)$. By Lemma \ref{lemsym}, we can assume that $\Re{q}\geq0$, $\Im{q}\geq0$. Note that $\delta>2$ and $\gamma>0$, then by Equation \eqref{Equ:qsolutions}, we have
	\begin{equation}
	q+q^{-1}=\sqrt{2(\delta^2-2)/(\delta^2-(\frac{\delta^2-2}{\delta}\frac{\gamma-1}{\gamma+1})^2)}.
	\end{equation}
	(One can check the term in the square root is positive.)
	We have $q=e^{i\theta}$ with $0\leq\theta\leq\pi/2$ or $q\geq1$.
	
	Let $[n]$ denote the Young diagram with $1$ row and $n$ columns and $[1^n]$ denote the Young diagram with $n$ rows and $1$ column.
	
	Case 1: $q>1$. By Lemma \ref{lemsym}, we can assume that $\Re{r}\geq0$. By $\displaystyle\frac{r-r^{-1}}{q-q^{-1}}=\delta>2$, we have that $r>1$.
	If $r=q^N$, for some $N\in\mathbb{N}$, then we know that $\mathscr{P}_\bullet^H(q,r)$ has positivity. Otherwise, $q^{N}<r<q^{N+1}$. Then the idempotent $m_{[N+2]}$ is well-defined and $Tr(m_{[N+2]})<0$. By Proposition \ref{embed}, $\mathscr{P}_\bullet^H(q,r)$ does not have positivity.
	
	Case 2: $q=e^{i\theta}$, and $q\neq1$: By Lemma \ref{lemsym}, we can assume that $\Im{r}\geq0$. By $\displaystyle\frac{r-r^{-1}}{q-q^{-1}}=\delta>2$, we have $r=e^{i\alpha}$, for some $\theta<\alpha<\pi-\theta$.
	
	Subcase 1: If $N\theta<\alpha<(N+1)\theta$, for some $N\in \mathbb{N}$, then
	the idempotent $m_{[N+2]}$ is well-defined and $Tr(m_{[N+2]})<0$. By Proposition \ref{embed}, $\mathscr{P}_\bullet^H(q,r)$ does not have positivity.
	
	Subcase 2: If $\alpha=N\theta$ and $\frac{\pi}{N+l+1}<\theta<\frac{\pi}{N+l}$, for some $N,l \in \mathbb{N}$, then the idempotent $m_{[1^{l+1}]}$ is well-defined and $Tr(m_{1^[l+1]})<0$.  By Proposition \ref{embed}, $\mathscr{P}_\bullet^H(q,r)$ does not have positivity.
	
	Subcase 3: If $\alpha=N\theta$ and $\theta=\frac{\pi}{N+l}$, for some $N,l \in \mathbb{N}$, then we know that $\mathscr{P}_\bullet^H(q,r)$ has positivity.
	
	Case 3: $q=1$: By $\displaystyle r-r^{-1}=\delta(q-q^{-1})$, we have $r=1$. By a similar argument in Case 1, one can show that $\delta=N$, for some $N\in \mathbb{N}$. In this case, we know that $\mathscr{P}_\bullet^H(q,r)$ has positivity.
	
\end{proof}

Therefore we obtain our classification result, Theorem \ref{Thm:main}. The ones with positivity all come from representations of the quantum groups $U_q(SU(N))$ or $E_6$. Note that $E_6^{(1)}$ really comes from $SU(3)_3.$

\end{document}